\newtheorem{theorem}{Theorem}[section]
\newtheorem{corollary}[theorem]{Corollary} 
\newtheorem{lemma}[theorem]{Lemma}
\newtheorem{prop}[theorem]{Proposition}
\newtheorem{letterthm}{Theorem}
\theoremstyle{definition}
\newtheorem{example}[theorem]{Example}
\newtheorem{remark}[theorem]{Remark}
\def\N{\mathbb N}
\def\Z{\mathbb Z}
\def\R{\mathbb R}
\def\<{\langle}
\def\>{\rangle}
\def\mod{{\rm{Mod}}(S)} 
\def\mods{{\rm{Mod}}(S')} 
\def\e{\varepsilon}
\def\G{\Gamma}
\def\g{\gamma}
\title{On the Subgroups of Right Angled Artin Groups and Mapping Class Groups}
\begin{document}

\begin{abstract}  
There exist right angled Artin groups $A$ such that the isomorphism problem for finitely presented subgroups
of $A$ is unsolvable, and for certain finitely presented 
subgroups the conjugacy and membership problems are unsolvable. It follows that 
if $S$ is a surface of finite type and the genus of $S$ is sufficiently large, then
the corresponding decision problems for the mapping class group
$\mod$ are unsolvable. Every virtually special
group embeds in the mapping class group of infinitely many closed surfaces. Examples are given of
finitely presented subgroups of mapping class groups that have infinitely
many conjugacy classes of torsion elements.
\end{abstract}

\author[Bridson]{Martin R.~Bridson}
\address{Martin R.~Bridson\\
Mathematical Institute \\
24-29 St Giles' \\
Oxford OX1 3LB \\
U.K. }
\email{bridson@maths.ox.ac.uk} 

\thanks{This worked was funded by an EPSRC Senior Fellowship, a Royal Society Wolfson Merit Award, and the
Mittag Leffler Institute.}

\subjclass{20F65, 20F28, 53C24, 57S25}

\keywords{Mapping class groups, RAAGs, virtually special groups, decision problems}

\maketitle

\section{Introduction} 

The subgroups of mapping class group have long been the subject of intense study, and much is known about
their structure, cf. \cite{ivanov}. But until recently, there have been very few results describing 
large classes of groups that embed in mapping class groups. A symptom of this
paucity is that it has been unknown whether the finitely presented subgroups of mapping class groups 
might be constrained to such an extent that the basic decision problems of group theory are solvable in this
class. 

Recent advances in geometric group theory and low-dimensional topology allow one to construct a much
greater array of subgroups than hitherto.  The purpose of this note is to explain how one can exploit these advances to
resolve many of the problems highlighted by Farb  in
his account of the outstanding problems concerning subgroups of mapping class groups \cite{farb}.  In particular we
prove the following theorems. Here, and throughout this article, $S$ denotes a surface of finite type (which may
have non-empty boundary and punctures) and $\mod$ is its mapping class group.

\begin{letterthm}\label{t:iso} If the genus of $S$ is sufficiently
large, then the isomorphism problem for the finitely presented
subgroups of $\mod$ is unsolvable.
\end{letterthm}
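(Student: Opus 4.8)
The plan is to reduce Theorem~\ref{t:iso} to its analogue for right-angled Artin groups, using nothing about mapping class groups beyond an explicit embedding of a suitable RAAG. Two ingredients are needed. The first is a right-angled Artin group $A$ --- which may be taken to be a direct product of finitely many finitely generated free groups --- together with a recursive sequence of finitely presented subgroups $P_1, P_2, \dots$ of $A$, each given both by a finite generating set inside $A$ and by a finite presentation $\pi_n$ computable from $n$, such that $\{\,n : P_n \cong P_1\,\}$ is not recursive. This is precisely the content, in the effective form in which such statements are proved, of the given fact that the isomorphism problem for finitely presented subgroups of $A$ is unsolvable; the $P_n$ are fibre products of Mihailova--Miller type, made finitely presented by means of the 1-2-3 Theorem. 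The second ingredient is an embedding $\phi\colon A\hookrightarrow\mod$ valid for every surface $S$ of sufficiently large genus.

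For the second ingredient, note that $A$ is in particular virtually special, so the embedding theorem stated in the introduction applies; more to the point, the explicit constructions underlying it (sending the standard generators of $A$ to suitable powers of Dehn twists along a finite system of curves lying in the interior of a compact subsurface) provide an embedding $A\hookrightarrow{\rm Mod}(R)$ for some compact surface $R$ with non-empty boundary, of some genus $h$ depending only on $A$. Let $S$ now be an arbitrary surface with ${\rm genus}(S)\ge h+1$. One can realise $R$ as an incompressible subsurface of $S$ having a complementary component of positive genus; since the components of $S\setminus R$ are then neither disks nor annuli and their boundary is essential in $S$, the homomorphism ${\rm Mod}(R)\to\mod$ induced by inclusion is injective, by the theorem of Paris and Rolfsen on geometric subgroups. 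Composing with $A\hookrightarrow{\rm Mod}(R)$ yields $\phi\colon A\hookrightarrow\mod$ whenever ${\rm genus}(S)\ge N:=h+1$; the argument is unaffected by boundary components or punctures of $S$.

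The theorem now follows at once. Fix $S$ with ${\rm genus}(S)\ge N$ and the embedding $\phi\colon A\hookrightarrow\mod$. For each $n$, the restriction of $\phi$ is an isomorphism from $P_n$ onto $\phi(P_n)$, so $\phi(P_n)$ is a finitely presented subgroup of $\mod$ presented, up to isomorphism, by $\pi_n$ itself; thus $(\pi_n)_{n\ge1}$ is a recursive sequence of finite presentations, each presenting a group isomorphic to a finitely presented subgroup of $\mod$, for which $\{\,n:\phi(P_n)\cong\phi(P_1)\,\}=\{\,n:P_n\cong P_1\,\}$ is not recursive. Hence there is no algorithm deciding isomorphism among the finitely presented subgroups of $\mod$. (If one prefers these subgroups to be specified by finite sets of mapping classes rather than by abstract presentations, note that $\phi$ is given explicitly on generators, so a finite generating set for $\phi(P_n)\le\mod$ is computable from $n$.)

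I expect the substance of the argument to lie entirely in the two quoted inputs and in the passage to \emph{all} sufficiently large genus, rather than in the reduction itself: on the group-theoretic side, producing $A$ and the recursive family $(P_n)$ with the asserted non-recursiveness; and, on the topological side, arranging the RAAG embedding to be supported in a bounded subsurface, so that Paris--Rolfsen injectivity transports it uniformly to every surface of large genus (the bare statement ``embeds in infinitely many closed surfaces'' does not by itself suffice here, since a closed surface is not a subsurface of anything). Once these are in place, as I have assumed, the remaining steps are routine.
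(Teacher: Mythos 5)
Your reduction is correct and is essentially the paper's own argument: Theorem~\ref{t:a'} (via the effective form of the Bridson--Miller criterion, Theorem~\ref{t:bm}) supplies the recursive family of finitely presented subgroups with undecidable isomorphism inside a RAAG $A$, and the Dehn-twist realisation of $A$ on a curve system supported in a compact subsurface (Section~\ref{s:raags}) transports that family into $\mod$ for every $S$ of sufficiently large genus, exactly as you describe. One correction: the parenthetical claim that $A$ ``may be taken to be a direct product of finitely many finitely generated free groups'' is false --- the Bridson--Miller input needs a torsion-free hyperbolic group with an infinite, finitely generated normal subgroup of infinite index (impossible in a free group, since a finitely generated normal subgroup of a free group is trivial or of finite index), the relevant $\Gamma_0$ only embeds in a RAAG by virtue of the Haglund--Wise special Rips construction, and the paper explicitly records that the isomorphism problem for finitely presented subgroups of direct products of free groups is \emph{open}; delete that aside and the proof stands.
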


\begin{letterthm}\label{t:cong} If the genus of $S$ is sufficiently
large, then there is a finitely presented subgroup of $\mod$ with unsolvable conjugacy problem. 
\end{letterthm}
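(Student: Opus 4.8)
The plan is to produce a single finitely presented group $P$ with two properties: its conjugacy problem is unsolvable, and it embeds in a right-angled Artin group $A$. Granting this, Theorem~\ref{t:cong} follows at once: by the work of Koberda (see also Crisp--Wiest and Clay--Leininger--Mangahas) every right-angled Artin group embeds in the mapping class group of a surface of sufficiently large genus, so composing $P\hookrightarrow A\hookrightarrow\mod$ yields the desired subgroup. This last step is soft --- having unsolvable conjugacy problem is an intrinsic property of the abstract group $P$, so nothing about decision problems needs to be tracked through the embeddings.

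To construct $P$, I would run a version of the Rips construction that stays inside the class of special groups. Begin with a group $Q$ of type $F_3$ with unsolvable word problem (such groups are known to exist; there are even groups of type $F_\infty$ with unsolvable word problem) and produce a short exact sequence $1\to N\to\Gamma\xrightarrow{\,p\,}Q\to 1$ with $N$ finitely generated and $\Gamma$ hyperbolic, defined by a sufficiently generic $C'(1/6)$ presentation. By Wise's theorem $\Gamma$ is then compact special, hence (Haglund--Wise) a subgroup of a right-angled Artin group; and genericity of the relators lets us insist that a chosen generator $x_0$ of $N$ has infinite order and is not a proper power, so that $C_\Gamma(x_0)=\langle x_0\rangle\subseteq N$. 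Let $P=\{(g,h)\in\Gamma\times\Gamma:p(g)=p(h)\}$ be the associated fibre product. Then $P$ lies in $\Gamma\times\Gamma$, which is again compact special, so $P$ embeds in a right-angled Artin group; and because $N$ is finitely generated, $\Gamma$ finitely presented and $Q$ of type $F_3$, the $1$-$2$-$3$ Theorem of Baumslag--Bridson--Miller--Short guarantees that $P$ is finitely presented.

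It remains to verify that $P$ has unsolvable conjugacy problem, which is the heart of the matter. Since $\Gamma$ is hyperbolic its word problem is solvable, so from a word $w$ in the generators of $Q$ one can effectively compute a lift $\widehat w\in\Gamma$ with $p(\widehat w)$ the element named by $w$. A short computation with centralisers --- which I have checked runs in both directions --- shows that, inside $P$, the elements $(\widehat w\,x_0\,\widehat w^{-1},\,x_0)$ and $(x_0,x_0)$ are conjugate if and only if $p(\widehat w)\in p(C_\Gamma(x_0))=\{1\}$, i.e.\ if and only if $w$ represents the identity of $Q$. Both of these elements are produced, as words in a fixed finite generating set of $P$, by an effective procedure with input $w$; hence $w\mapsto\bigl((\widehat w\,x_0\,\widehat w^{-1},x_0),\,(x_0,x_0)\bigr)$ is a reduction of the word problem of $Q$ to the conjugacy problem of $P$, and the latter is therefore unsolvable.

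The main obstacle is to carry out the Rips construction so that all of the requirements hold simultaneously: $\Gamma$ must be genuinely special (so that $P$ sits inside a right-angled Artin group, not merely a virtually special one), the kernel $N$ must stay finitely generated while $Q$ retains type $F_3$ (so that $P$ is finitely presented via the $1$-$2$-$3$ Theorem), and $C_\Gamma(x_0)$ must be small enough for the centraliser computation to detect the word problem of $Q$. Each ingredient is available separately; the delicate part is compatibility --- one must choose the Rips presentation to be $C'(1/6)$ and special while still generic enough, within that constrained family, to force $C_\Gamma(x_0)=\langle x_0\rangle$.
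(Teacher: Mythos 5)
Your proposal is correct and follows the same architecture as the paper: the Haglund--Wise (virtually special) Rips construction applied to a type-$F_3$ group $Q$ with unsolvable word problem, the fibre product $P<\G\times\G$ made finitely presented by the 1-2-3 Theorem, a centraliser argument that detects the word problem of $Q$ inside the conjugacy problem of $P$, and finally the embedding of RAAGs into mapping class groups of all surfaces of sufficiently large genus. The differences are in the middle step. The paper factors its reduction through the membership problem: it first shows (Lemma \ref{l:memb}) that membership in $P$ is undecidable, and then invokes Lemma \ref{l2} from \cite{bbms}, which converts undecidable membership into undecidable conjugacy whenever some $a\in N\times N$ has centraliser contained in $P$. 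Your reduction $w\mapsto\bigl((\widehat w x_0\widehat w^{-1},x_0),(x_0,x_0)\bigr)$ is precisely the composite of those two steps, and your verification of both directions is sound (including the effectivity, which requires rewriting $\widehat w x_0\widehat w^{-1}$ as a word in the generators of $N$ via the conjugation relations --- the same rewriting used in the paper's Lemma \ref{l2}). Where you make life harder than necessary is in controlling the centraliser: you demand $C_\G(x_0)=\<x_0\>$ and propose to force this by genericity of the Rips relators, flagging it as the delicate point. All your argument actually uses is $p(C_\G(x_0))=\{1\}$, i.e.\ $C_\G(x_0)\subseteq N$, and this is automatic (the paper's Lemma \ref{l1}): in the torsion-free hyperbolic group $\G$ the centraliser of $x_0\ne 1$ is infinite cyclic, say $\<z\>$ with $z^k=x_0$, so $p(z)^k=1$ in $Q$; choosing $Q$ torsion-free (e.g.\ with an aspherical presentation, as the Collins--Miller groups have) forces $p(z)=1$. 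This removes the genericity requirement entirely. One smaller imprecision: a generic $C'(1/6)$ group is cubulated by Wise and hence virtually special by Agol, but not special outright; either invoke the Haglund--Wise Rips theorem as a black box or pass to a finite-index subgroup $\G_0<\G$ as the paper does --- this costs nothing, since the corresponding finite-index subgroup $Q_0<Q$ is still of type $F_3$ with unsolvable word problem.
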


\begin{letterthm}
If the genus of $S$ is sufficiently
large, then there are finitely presented subgroups of $\mod$ for which the membership problem is unsolvable.
\end{letterthm}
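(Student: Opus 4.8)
The plan is to transport the corresponding fact about right-angled Artin groups (RAAGs) through an embedding into $\mod$, exploiting that the membership problem can only become harder in an overgroup. First I would record the elementary transport principle: if $H\le K\le G$ and the generators of $K$ can be rewritten effectively as words in the generators of $G$, then unsolvability of the membership problem for $H$ in $K$ forces unsolvability of the membership problem for $H$ in $G$ --- for a hypothetical algorithm deciding membership of $H$ in $G$, applied to elements of $K$ expressed in the generators of $G$, would decide membership of $H$ in $K$. Hence it suffices to produce a RAAG $A$ together with a finitely presented subgroup $H\le A$ whose membership problem is unsolvable, and then appeal to Koberda's theorem that every RAAG embeds in $\mod$ once the genus of $S$ is large enough. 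That embedding is explicit --- the standard generators of $A$ are sent to prescribed products of Dehn twists --- so the rewriting hypothesis is met.

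Building $(A,H)$ is the substance of the argument (and is precisely the RAAG assertion flagged in the abstract). I would obtain it from the Rips construction in its cubical form. Fix a finitely presented group $Q$ of finiteness type $F_3$ with unsolvable word problem. Wise's special version of the Rips construction yields a short exact sequence $1\to N\to\Gamma\to Q\to 1$ with $N$ finitely generated and $\Gamma$ (virtually) compact special; after passing, if necessary, to a finite-index normal subgroup --- which preserves finite presentability of $\Gamma$, finite generation of the kernel, finiteness type $F_3$ of the quotient, and unsolvability of its word problem --- we may assume $\Gamma$ is a subgroup of a RAAG $A_0$ (Haglund--Wise). By the $1$-$2$-$3$ theorem of Baumslag--Bridson--Miller--Short, the fibre product $P=\{(\gamma_1,\gamma_2)\in\Gamma\times\Gamma:\gamma_1 N=\gamma_2 N\}$ is finitely presented. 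Put $A=A_0\times A_0$, again a RAAG, and $H=P\le\Gamma\times\Gamma\le A$. Mihailova's observation finishes this step: $(\gamma,1)\in P$ if and only if $\gamma\in N$, i.e. if and only if $\gamma$ represents the identity in $Q$, so a solution to the membership problem for $P$ in $\Gamma\times\Gamma$ --- a fortiori in $A$ --- would solve the word problem of $Q$.

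Combining the two halves gives the theorem: $H$ is a finitely presented subgroup of $A$ with unsolvable membership problem, and $A$ embeds in $\mod$ for every $S$ of sufficiently large genus, so the transport principle produces a finitely presented subgroup of $\mod$ with unsolvable membership problem. I expect the main obstacle to be arranging that $\Gamma$ is a subgroup of a RAAG on the nose rather than merely virtually so, while keeping the kernel $N$ finitely generated; this is handled by the passage to a finite-index subgroup indicated above, the only real work being to verify that every property needed to invoke the $1$-$2$-$3$ theorem and Mihailova's argument survives. A smaller point needing attention is the existence of groups of type $F_3$ with unsolvable word problem, which is what licenses the use of the $1$-$2$-$3$ theorem in the first place.
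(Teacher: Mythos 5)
Your proposal is correct and follows essentially the same route as the paper: the Haglund--Wise (virtually special) Rips construction applied to a type $F_3$ group with unsolvable word problem, passage to a finite-index subgroup lying in a RAAG, the 1-2-3 Theorem for finite presentability of the fibre product, the Mihailova-style observation that $(\gamma,1)\in P$ iff $\gamma\in N$, and transport of unsolvability through the embedding of the RAAG into $\mod$ (the paper cites Crisp--Wiest rather than Koberda for that embedding, but this is immaterial). The finite-index bookkeeping you flag as the "main obstacle" is handled in the paper exactly as you describe.
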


It is important to note that these theorems are about finitely {\em presented} subgroups: the corresponding
theorems for finitely generated groups are much easier; they
follow from the observation that the mapping class group of any 
surface of genus at least 2 contains a direct product of non-abelian free groups
(cf.~\cite{cfm-thesis}).

A {\em right angled Artin group} (RAAG) is a group given by a presentation of the form
$$
A = \< v_1,\dots,v_n \mid [v_i,v_j] = 1 \,\forall (i,j)\in E\>.
$$
A vast array of subgroups of mapping class groups arise from the observation that
an arbitrary  RAAG can be embedded in the mapping
class groups of any surface of sufficiently high genus (Section \ref{s:raags}). We
exploit this by deducing the above results from the corresponding results for RAAGs.

\begin{theorem}\label{t:a'} There exists a right angled Artin group $A$
such that the isomorphism problem for the finitely presented
subgroups of $A$ is unsolvable.
\end{theorem}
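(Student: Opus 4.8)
The plan is to produce a RAAG $A$ together with two recursive sequences of finite presentations of subgroups of $A$, say $(P_n)$ and $(R_n)$, for which $\{\,n : P_n\cong R_n\,\}$ is not a recursive set; since a solution to the isomorphism problem for the finitely presented subgroups of $A$ would decide this set, the theorem follows. The $P_n$ will be fibre products, and the basic tool is the $1$--$2$--$3$ Theorem of Baumslag--Bridson--Miller--Short: from a short exact sequence $1\to N\to\Gamma\to Q\to 1$ with $N$ finitely generated, $\Gamma$ finitely presented and $Q$ of type $F_3$, one obtains algorithmically a finite presentation of the fibre product $P=\{(\g_1,\g_2)\in\Gamma\times\Gamma : \g_1 \text{ and }\g_2 \text{ have the same image in } Q\}$. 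Two elementary reductions let RAAGs play the role of ambient group: the class of RAAGs is closed under finite direct products, so it suffices to place the groups $\Gamma$ inside RAAGs; and the properties ``finitely presented'' and ``of type $F_3$'' are inherited by, and inherited from, subgroups of finite index, so it is enough that $\Gamma$ be virtually special.

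For the input data I would take, from the Adian--Rabin construction, a recursive sequence of finite presentations of groups $Q_n$ such that the predicate ``$Q_n\cong 1$'' is not recursive and such that each $Q_n$ is isomorphic either to $1$ or to a single, fixed Bestvina--Brady group $B$ --- for instance the kernel of $A_L\to\Z$ where $L$ is a flag triangulation of $S^3$, so that $B$ is of type $F_3$ but not of type $F_4$ and $B$ embeds in the RAAG $A_L$; since both $1$ and $B$ are of type $F_3$, every $Q_n$ is. Applying a version of the Rips construction --- taken in the refined form that produces a hyperbolic group with a finitely presented kernel --- yields, algorithmically, a short exact sequence $1\to N_n\to\Gamma_n\to Q_n\to 1$ with $N_n$ finitely presented and $\Gamma_n$ hyperbolic; by Wise's theorem $\Gamma_n$ is virtually special, so a finite-index subgroup $\Gamma_n^\circ$ embeds in a RAAG, and --- with sufficient control on the combinatorial size of the data --- all the $\Gamma_n^\circ$ can be realised inside a single RAAG $A$. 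Put $R_n:=\Gamma_n^\circ\times\Gamma_n^\circ$ and let $P_n$ be the fibre product of $1\to N_n\cap\Gamma_n^\circ\to\Gamma_n^\circ\to\Gamma_n^\circ/(N_n\cap\Gamma_n^\circ)\to1$; both are finitely presented subgroups of $A$ (the second by the $1$--$2$--$3$ Theorem), with presentations computable from $n$.

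It remains to show that $P_n\cong R_n$ precisely when $Q_n\cong1$. If $Q_n\cong 1$ then $N_n=\Gamma_n$, the fibre-product condition is vacuous, and $P_n=\Gamma_n^\circ\times\Gamma_n^\circ=R_n$. If $Q_n\not\cong1$ then $\Gamma_n^\circ/(N_n\cap\Gamma_n^\circ)$ is a finite-index subgroup of $B$, hence of type $F_3$ but not $F_4$; by the finiteness-property theory of symmetric fibre products of Bridson--Miller (the ``$n$--$(n+1)$--$(n+2)$'' phenomenon generalising the $1$--$2$--$3$ Theorem: with $N$ of type $F_2$ and $\Gamma$ of type $F_\infty$, $P$ is of type $F_3$ if and only if $Q$ is of type $F_4$), $P_n$ is finitely presented but not of type $F_3$, whereas $R_n$, being a direct product of hyperbolic groups, is of type $F_\infty$; as ``type $F_3$'' is an isomorphism invariant, $P_n\not\cong R_n$. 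I expect this final step, together with the preparations it forces, to be the crux: one must make the Adian--Rabin family uniformly of type $F_3$ with every non-trivial member equal to the one group $B$, must fit all the $\Gamma_n^\circ$ into a single RAAG, and must have the sharp two-directional form of the fibre-product finiteness theorem. The other ingredients --- the $1$--$2$--$3$ Theorem, Wise's theorem that $C'(1/6)$ groups are virtually special, the Bestvina--Brady computation of finiteness length, and the finite-index and direct-product bookkeeping for RAAGs --- are by now standard.
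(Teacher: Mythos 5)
Your strategy is genuinely different from the paper's: the paper runs the Haglund--Wise Rips construction once, over a fixed non-abelian free group $Q$, and then quotes the Bridson--Miller recognition theorem (Theorem~\ref{t:bm}), which already supplies a recursive sequence of finitely presented subgroups of the single group $\G_0\times\G_0\times F$ --- hence of the single RAAG $A\times A\times F$ --- with unsolvable isomorphism problem. Your plan instead varies the quotient $Q_n$ over an Adian--Rabin family and tries to distinguish fibre products from full direct products by finiteness properties. Two steps of this plan are genuine gaps. First, the ``single RAAG'' step: the Haglund--Wise construction applied to $Q_n$ produces a different virtually special hyperbolic group $\G_n$, hence a different ambient RAAG, for each $n$, and there is no reason the $\G_n^\circ$ should all embed in one fixed finitely generated RAAG $A$ --- no fixed RAAG contains all RAAGs, and nothing in the construction bounds the complexity of the special cube complexes uniformly in $n$. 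Since the theorem concerns the finitely presented subgroups of one fixed $A$, this is not bookkeeping; it is the crux, and it is exactly what the paper's use of a single exact sequence (with the undecidability pushed into the choice of subgroup of a fixed ambient group rather than into the choice of ambient group) is designed to avoid.

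Second, the invariant you use to show $P_n\not\cong R_n$ rests on a theorem that does not exist in the form you need. The $1$--$2$--$3$ Theorem gives only the positive direction one level down; the ``$2$--$3$--$4$'' statement (that $P$ is of type $F_3$ when $Q$ is of type $F_4$) is known only under additional hypotheses, and the converse you invoke --- that $P$ fails to be of type $F_3$ whenever $Q$ fails to be of type $F_4$ --- is precisely the hard, open direction of the $n$--$(n{+}1)$--$(n{+}2)$ problem for general hyperbolic $\G$. Without it you have no proof that $P_n\not\cong\G_n^\circ\times\G_n^\circ$ when $Q_n\cong B$. There are also secondary issues you would need to address (arranging the Adian--Rabin family so that every non-trivial member is isomorphic to the one fixed Bestvina--Brady group $B$, and computing $F_3$-data for $Q_n$ uniformly in $n$ so that the $1$--$2$--$3$ Theorem yields finite presentations of the $P_n$ \emph{algorithmically}), but the two gaps above are the fatal ones. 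The repair is essentially to do what the paper does: fix one exact sequence $1\to N\to\G\to Q\to 1$ with $Q$ free, pass to the special finite-index subgroup once, and let Theorem~\ref{t:bm} manufacture the undecidable family inside the resulting fixed RAAG.
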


\begin{theorem}\label{t:b'}
There exists a right angled Artin group $A$ and a finitely presented subgroup $H<A$ such
that the conjugacy and membership problems are unsolvable for $H$.  
\end{theorem}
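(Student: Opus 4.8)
The plan is to realise $H$ as a fibre product inside a product of two copies of a group $G$ furnished by Wise's special version of the Rips construction, and to certify that $H$ is finitely presented by the 1--2--3 Theorem of Baumslag--Bridson--Miller--Short. The special Rips construction attaches to any finitely presented group $Q$ a short exact sequence $1\to N\to G\xrightarrow{\,p\,}Q\to 1$ in which $N$ is finitely generated, $G$ is torsion-free and hyperbolic, and $G$ is the fundamental group of a compact special cube complex; by the Haglund--Wise embedding theorem such a $G$ sits inside a right-angled Artin group $A_0$, so $G\times G$, and hence every subgroup of it, sits inside the RAAG $A:=A_0\times A_0$. I would take $Q$ to be a torsion-free finitely presented group of type $F_\infty$ with unsolvable word problem --- such groups exist; for example, a finitely presented group of cohomological dimension two is automatically of type $F_\infty$, and there are torsion-free examples with unsolvable word problem. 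Because $N$ is finitely generated, $G$ is finitely presented, and $Q$ is of type $F_3$, the 1--2--3 Theorem shows that the fibre product $P=\{(g_1,g_2)\in G\times G:\ p(g_1)=p(g_2)\}$ is finitely presented. Set $H:=P<A$.

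Defeating the membership problem for $H$ in $A$ is then quick. Fix the generators $x_1,\dots,x_k,a_1,a_2$ of $G$ coming from the Rips construction, where $p$ sends $x_1,\dots,x_k$ to a generating set of $Q$ and kills $a_1,a_2$, so that $N=\langle a_1,a_2\rangle$. For a word $w$ in these letters, the element $(w,1)$ of $G\times G$ --- which we can write out explicitly in the standard generators of $A$ by composing with the Haglund--Wise embedding --- lies in $H$ if and only if $p(w)=1$ in $Q$, that is, if and only if $w$ represents the identity of $Q$. Since $Q$ has unsolvable word problem, the membership problem for $H$ is unsolvable.

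The conjugacy problem for $H$ is the substantive step: the aim is to encode the word problem of $Q$ into $P$-conjugacy of explicit elements. Because $G$ is torsion-free hyperbolic, $C_G(a_1)$ is infinite cyclic, and because $Q$ is torsion-free this centraliser lies in $N$, so $p$ kills it. Writing $P=(N\times 1)\rtimes\Delta_G$ --- with $\Delta_G$ the diagonal and $N\times 1\trianglelefteq P$ --- a short computation shows that, for every word $w$ in the generators of $G$, the elements
$$ u_w:=(w\,a_1\,w^{-1},\ a_1)\qquad\text{and}\qquad v:=(a_1,\ a_1) $$
of $P$ are conjugate in $P$ if and only if $p(w)=1$ in $Q$; the inclusion $C_G(a_1)\le N$ is precisely what renders the conjugator in the first coordinate harmless unless $p(w)=1$. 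Moreover $u_w$ is effectively a word in the generators of $H$: the Rips relations $x_ja_ix_j^{-1}=W_{ji}(a_1,a_2)$ let one rewrite $w\,a_1\,w^{-1}$ as an explicit word in $a_1,a_2$, so that $u_w=(wa_1w^{-1},1)\cdot(1,a_1)$ is a computable product of the generators $(a_i,1)$ and $(1,a_i)$ of $P$. Hence a solution to the conjugacy problem for $H$ would solve the word problem for $Q$, a contradiction. The parallel statements for $\mod$ with $S$ of large genus then follow from the embedding of $A$ in $\mod$ established in Section~\ref{s:raags}.

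I expect the conjugacy reduction to be the main obstacle. Membership is essentially automatic once the set-up is in place, and finite presentability is delivered by the 1--2--3 Theorem; but making conjugacy as hard as the word problem of $Q$ requires pinning down test elements whose $P$-conjugacy is controlled purely by their images in $Q$, and then checking both the group-theoretic computation and its effectivity, the latter resting on the explicit form of the Rips relations. A secondary point that needs care is the input to the 1--2--3 Theorem: one wants $Q$ of type $F_3$ with unsolvable word problem, and it is convenient to take $Q$ torsion-free so that the centraliser computation in $G$ goes through.
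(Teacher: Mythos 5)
Your proposal is correct and follows essentially the same route as the paper: the Haglund--Wise Rips construction applied to a torsion-free group of type $F_3$ with unsolvable word problem, the 1--2--3 Theorem for finite presentability, the observation that $(w,1)\in P$ iff $p(w)=1$ for membership, and a conjugacy reduction via $C_G(a_1)\le N$ that is exactly the paper's Lemmas \ref{l1} and \ref{l2} with the test pair $u_w=(w,1)(a_1,a_1)(w,1)^{-1}$, $v=(a_1,a_1)$ written out explicitly. The only point to tidy is that the cited Rips construction produces a \emph{virtually} special $G$, so before embedding in a RAAG you should pass to a finite-index subgroup $G_0$ and the induced sequence $1\to N_0\to G_0\to Q_0\to 1$ (noting that $Q_0$ is still torsion-free, of type $F_3$, and has unsolvable word problem), as the paper does.
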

 
With more care, one can arrange for the embedding of the RAAG to lie in the Torelli subgroup 
$\mathcal T(S) < \mod$  (cf.~Koberda~\cite{koberda}), so  the undecidability phenomena
described above are already present among the finitely presented subgroups of $\mathcal T(S)$. 

Our proofs of the above results rely on the 1-2-3 Theorem of  \cite{bbms} and Bridson and Miller's work on the
isomorphism problem for finitely presented subdirect products of hyperbolic groups
\cite{BM1}. They also rely  on
Haglund and Wise's version of the Rips construction \cite{HW}.

 Theorems \ref{t:a'} and \ref{t:b'}
  stand in sharp contrast with what is known
about decision problems for finitely presented subgroups of the most classical right angled Artin
groups, i.e. direct products of free groups. In that setting, all finitely presented
subgroups have a solvable conjugacy problem and there is a uniform solution to the membership
problem \cite{BM2}. Moreover, these positive results extend to all residually free groups \cite{bhms}. The
isomorphism problem for finitely presented subgroups of direct products of free groups remains open.

Theorems A to C lend substance to the general observation that the recent developments
around the notion of virtually special groups show that the range of finitely presented subgroups of
mapping class groups is much greater than might previously have been imagined. In
Section \ref{s:raags} we shall see that
if a group virtually embeds in a RAAG, then it embeds
in the mapping class groups of infinitely many closed surfaces (but not {\em all} surfaces of large genus).
This includes all groups that are
{\em virtually special} in the sense of Haglund and Wise \cite{HW}. This
 class of groups has recently been
shown to be incredibly rich. Most spectacularly, it includes the fundamental group of every hyperbolic
3-manifold \cite{berg-wise}, \cite{agol}.

\begin{prop}\label{p:vs} If $\G$ is virtually special, then it embeds in the mapping class group of
infinitely many closed surfaces.
\end{prop}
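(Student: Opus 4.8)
The plan is to combine the embedding of right angled Artin groups into mapping class groups established in Section~\ref{s:raags} with a wreath-product construction that promotes an action of a finite-index subgroup to an action of the whole of $\G$. Since $\G$ is virtually special it has a finite-index subgroup which is the fundamental group of a compact special cube complex; replacing it by its normal core in $\G$, which corresponds to a finite cover and is therefore again the fundamental group of a compact special cube complex, I may assume this subgroup $\G_0$ is \emph{normal} in $\G$. Write $Q=\G/\G_0$ and $m=|Q|$. By the Haglund--Wise theorem $\G_0$ embeds in a RAAG $A$, and by Section~\ref{s:raags} there is an integer $g_0$ such that $A$, and hence $\G_0$, embeds in ${\rm Mod}(\Sigma_{g,1})$, the mapping class group of the genus-$g$ surface with one boundary circle, for every $g\ge g_0$. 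The Kaloujnine--Krasner embedding of an extension into the wreath product of its kernel by its quotient gives $\G\hookrightarrow \G_0\wr Q=\G_0^{m}\rtimes Q$, where $Q$ permutes the $m$ factors through its regular representation. Applying the embedding of $\G_0$ in each coordinate, I obtain for every $g\ge g_0$ an embedding
$$\G\ \hookrightarrow\ {\rm Mod}(\Sigma_{g,1})\wr Q\ =\ {\rm Mod}(\Sigma_{g,1})^{m}\rtimes Q.$$

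It then suffices to realise this wreath product inside ${\rm Mod}(\widehat\Sigma)$ for a closed connected surface $\widehat\Sigma=\widehat\Sigma_g$ whose genus grows with $g$. To do this I fix a closed surface $T$ of genus at least $1$ carrying a free action of $Q$ (take the regular cover of a closed surface associated with an epimorphism onto $Q$), remove an open disk around each point of a single free $Q$-orbit, and glue a copy $W_q\cong\Sigma_{g,1}$ of the building-block surface to each of the $m$ resulting boundary circles. This is done $Q$-equivariantly, using a fixed identification of each $W_q$ with $\Sigma_{g,1}$, so that the action of $Q$ on the complement $T^{\circ}=T\setminus(m\text{ disks})$ extends to a free action of $Q$ on the closed connected surface $\widehat\Sigma_g:=T^{\circ}\cup\bigcup_{q\in Q}W_q$ with $q\cdot W_{q'}=W_{qq'}$. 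The induced homomorphism $Q\to{\rm Mod}(\widehat\Sigma_g)$, together with the subsurface-inclusion homomorphisms ${\rm Mod}(W_q)\to{\rm Mod}(\widehat\Sigma_g)$ (extension by the identity), whose images have pairwise disjoint supports, then assembles into a homomorphism ${\rm Mod}(\Sigma_{g,1})^{m}\rtimes Q\to{\rm Mod}(\widehat\Sigma_g)$; it is here that the equivariance of the construction is used, since the semidirect-product relations hold precisely because conjugation by the image of $q$ carries ${\rm Mod}(W_{q'})$ onto ${\rm Mod}(W_{qq'})$ via the chosen identifications. Since $\widehat\Sigma_g$ has genus $g(T)+mg$, letting $g\to\infty$ produces embeddings of $\G$ into the mapping class groups of infinitely many distinct closed surfaces.

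The step I expect to be the main obstacle is proving that the homomorphism ${\rm Mod}(\Sigma_{g,1})^{m}\rtimes Q\to{\rm Mod}(\widehat\Sigma_g)$ is injective once $g$ is large. This requires checking several facts about curves and subsurfaces in $\widehat\Sigma_g$: that the curves $\partial W_q$ are essential and pairwise non-isotopic (this is why $T$ is taken to have positive genus, so that $T^{\circ}$ is not an annulus even when $m=2$); that the inclusions ${\rm Mod}(W_q)\hookrightarrow{\rm Mod}(\widehat\Sigma_g)$ are injective and the boundary twists $T_{\partial W_q}$ remain nontrivial, which holds because each $\partial W_q$ is a separating curve with positive genus on both sides; and that no nontrivial element in the image of $Q$, which moves some $\partial W_{q'}$, can be supported on $\bigcup_q W_q$, since elements of the latter type fix every $\partial W_q$ up to isotopy. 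Granting these, a standard argument shows the kernel is trivial. This construction also accounts for the parenthetical remark in the introduction: a free $Q$-action forces $g(T)\equiv 1\pmod m$, so $\widehat\Sigma_g$ has genus $\equiv 1\pmod m$, and hence when $m>1$ the method embeds $\G$ in the mapping class groups of infinitely many but not all closed surfaces of large genus.
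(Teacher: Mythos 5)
Your proposal is correct and follows essentially the same route as the paper: the standard (Kaloujnine--Krasner) embedding of $\G$ into $\G_0\wr Q$ for a normal special finite-index subgroup $\G_0$, the embedding of $\G_0$ via a RAAG into the mapping class group of a bounded surface, and then Proposition~\ref{l:sg}'s equivariant gluing of copies of that surface along a free orbit on a closed surface carrying a $Q$-action. The only cosmetic differences are that the paper reduces to a single boundary component by attaching one-holed tori rather than starting from $\Sigma_{g,1}$, and that you spell out the injectivity verification that the paper leaves implicit.
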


The constructions in Section \ref{s:fin} respond to
Problem 3.10  of Farb's list \cite{farb}. We construct finitely presented subgroups
of  mapping class groups in which there are
 infinitely many conjugacy classes of elements of finite order.
(These constructions do not involve RAAGs.)  The first construction 
of subgroups of this sort was given by Brady, Clay and Dani \cite{BCD}. 
A construction that begins with ideas of Leary and Nuncinkis \cite{LN} allows one to
construct subgroups of this sort that are of type VF; see remark \ref{r:LN}.
 Note that, unlike Theorems A to C, these results do not carry over to the Torelli group, because that is torsion-free.

 In the final
 section of the paper we discuss the {\em{Dehn functions}}
 of finitely presented subgroups of RAAGs and mapping class groups.
 We give explicit examples of  right-angled Artin groups $A$ and finitely presented subgroups $P<A$
such that $P$ has an exponential Dehn function.

\section{Preliminaries}

It is assumed that the reader is familiar with basic definitions concerning
mapping class groups. We recall the basic vocabulary associated to decision problems
and recommend \cite{cfm} for a pleasant introduction to the subject.

The {\em word problem}
for a finitely generated group $\G$ asks for an algorithm that,
on input an arbitrary  word in the free group on the generators, will
correctly determine whether or not that word equals the identity in the group.
The {\em conjugacy problem} asks for an algorithm  that,
on input a pair of words, will
correctly determine whether or not they define conjugate elements of the group.
The {\em membership problem} for a subgroup $H<\G$ asks for an algorithm  that,
on input an arbitrary word in the free group on the generators of $\G$ will
correctly determine whether or not the element it defines lies in $H$.

The (in)solubility of each of these problems is independent of the chosen generating
set. The solubility
of the word problem is inherited by finitely generated subgroups and insolubility is
inherited by finite-index subgroups. 

\begin{remark}\label{r:HAG} Let $H<A<G$ be groups with $A$ and $G$ finitely generated. A solution
to the membership problem for $H$ in $G$ provides a solution to the membership problem for $H$ in $A$.
\end{remark}

The {\em isomorphism problem} for a class of finitely presentable groups asks
for a uniform partial algorithm that, 
given a pair of finite presentations defining groups in the
class, will halt and determine whether or not the groups presented are isomorphic.  

\section{The conjugacy and membership problems}

In the course of the last fifteen years, many results have been proved to the effect that 
finitely presented subgroups of 
direct products of hyperbolic (and related) groups can be remarkably wild. Most of these results rely
on the template described in \cite{mb:icm}: given a finitely presented group of interest, $Q$,
 one uses a variant of the Rips construction \cite{rips}
to construct an epimorphism $p:\G\to Q$ with finitely generated kernel, where $\G$ is hyperbolic; one then
forms the {\em{fibre product}} $P=\{(x,y)\mid p(x)=p(y)\}<\G\times\G$. In general, $P$ will be finitely generated
but not finitely presented. However, if $Q$ is of {\em type $F_3$} (i.e.~has a classifying space $K(Q,1)$ with finite 3-skeleton), then
the 1-2-3 Theorem of \cite{bbms} implies that $P$ is finitely presentable.  And if $Q$ is a complicated
group, its complications ought to transfer to $P$ in a manner one hopes to understand.

We recall the statement of the 1-2-3 Theorem.

\begin{theorem}{\cite{bbms}} Let $1\to N\to \G\overset{p}\to Q\to 1$ be a short exact sequence of groups.
If $N$ is finitely generated, $\G$ is finitely presented, and $Q$ is of type $F_3$, then
the associated fibre product is finitely presented.
\end{theorem}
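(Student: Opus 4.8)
The plan is to construct an explicit finite presentation of the fibre product $P=\{(x,y)\mid p(x)=p(y)\}<\G\times\G$, using the finite presentation of $\G$ to supply the bulk of the relators and the hypothesis that $Q$ is of type $F_3$ to control the remainder. First I would record that $P$ is finitely generated. Fix a finite generating set $X$ of $\G$ (available since $\G$ is finitely presented); because $N$ is finitely generated and normal in $\G$, it is generated as a group by a finite set $\{t_1,\dots,t_k\}\subseteq N$. Then $P$ is generated by the diagonal elements $(x,x)$ with $x\in X$ together with the $(t_i,1)$: for $(u,v)\in P$ one has $uv^{-1}\in N$, so $(u,v)=(uv^{-1},1)\,(v,v)$, where $(v,v)$ lies in the diagonal copy $\Delta\G\cong\G$ and $(uv^{-1},1)$ lies in $N\times1=\langle(t_1,1),\dots,(t_k,1)\rangle$.

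Next I would write down the presentation of $P$ attached to the short exact sequence $1\to N\times N\to P\to Q\to1$. Writing $\G=\langle X\mid R\rangle$ with $R$ finite, $Q$ has the finite presentation $\langle X\mid R\cup T\rangle$, where $T$ is a set of words in $X$ representing $t_1,\dots,t_k$. Lifting the generators of $Q$ diagonally into $P$ and taking $\{(t_i,1),(1,t_i)\}$ as generators of $N\times N$, the standard presentation of this extension has relators of four kinds: the relators $R$ evaluated on the diagonal generators; for each $t_i$, a relator expressing the diagonal element $(t_i,t_i)$ as the product $(t_i,1)(1,t_i)$; the conjugation relators recording how the diagonal generators act on the $(t_i,1)$ and the $(1,t_i)$, together with the commutators $[(t_i,1),(1,t_j)]$; and a presentation of $N\times N$ itself (i.e. the relators of $N$ in each of the two sets of generators). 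All of this is finite except the last batch, since $N$ is assumed only finitely generated, not finitely presentable. The entire content of the theorem is therefore the claim that, modulo the finitely many relators already listed, only finitely many relators from a presentation of $N\times N$ are actually needed.

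This last claim is the crux, and it is where the two remaining hypotheses are spent. Since $N$ embeds in the finitely presented group $\G$, every word $w$ in the $t_i$ that is trivial in $N$ is also trivial in $\G$ and hence bounds a van Kampen diagram over $\langle X\mid R\rangle$. The plan is to convert such a diagram into a van Kampen diagram over the presentation of $P$ whose boundary label is the corresponding word in the generators $(1,t_i)$ of $1\times N$. One cannot simply transplant the diagram into a ``second factor'', because $1\times\G$ is not a subgroup of $P$; instead one surrounds a ``diagonal core'' --- whose $2$-cells are the relators $R$ --- with an annular region that uses the conjugation relators to interpolate between the diagonal generators and the generators of $1\times N$. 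The obstruction to carrying out this interpolation coherently, and uniformly over all the words $w$, is a class controlled by $\pi_2$ of the presentation $2$-complex of $Q=\langle X\mid R\cup T\rangle$; because $Q$ is of type $F_3$, that $\pi_2$ is finitely generated as a $\Z Q$-module, so finitely many ``correction'' relators suffice to realise all the required diagrams simultaneously. Adjoining these finitely many relators to the list above yields a finite presentation of $P$.

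The main obstacle is precisely this transfer construction: building the annular region, bookkeeping the interaction of the conjugation relators with the finitely many module generators of that second-homotopy group, and confirming that one obtains a genuine finite \emph{presentation} (finite generation of the relation module would not suffice, since type $FP_2$ does not imply finite presentability). It should be stressed that no formal argument about the extension $1\to N\times N\to P\to Q\to1$ taken in isolation can work: there exist extensions with finitely generated kernel and type $F_\infty$ quotient whose middle group is not finitely presented --- for instance the direct product of $F_2\times F_2$ with a finitely generated, non-finitely-presentable subgroup of $F_2\times F_2$ --- so one must genuinely use the fibre-product structure together with the finite presentability of $\G$. The strengthening of the hypothesis on $Q$ from ``type $F_2$'' to ``type $F_3$'' is exactly what compensates for $N$ being assumed only finitely generated rather than finitely presented, which is the reason for the name.
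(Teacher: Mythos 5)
The paper offers no proof of this statement: it is the 1-2-3 Theorem, quoted verbatim from \cite{bbms}, so the only meaningful comparison is with the original argument there. Measured against that, your outline follows the same route and identifies all the right ingredients: $P$ is generated by the diagonal image of a generating set of $\G$ together with the elements $(t_i,1)$ (and $(1,t_i)$); the standard presentation of the extension $1\to N\times N\to P\to Q\to 1$ supplies every relator except a presentation of $N\times N$; and the hypothesis that $Q$ is of type $F_3$ is spent exactly where you spend it, namely on the finite generation, as a $\Z Q$-module, of $\pi_2$ of the finite presentation $2$-complex of $Q=\langle X\mid R\cup T\rangle$ (equivalently, of the module of identities among those relations), which is what permits one to replace the possibly infinite set of relators of $N\times N$ by finitely many ``correction'' relators. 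Your observation that no formal argument about the extension $1\to N\times N\to P\to Q\to 1$ in isolation can succeed --- with the counterexample $(F_2\times F_2)\times H$ for $H$ a finitely generated, non-finitely-presentable subgroup of $F_2\times F_2$, which is a retract of the middle group --- is correct and is an important sanity check, as is the warning that finite generation of the relation module ($FP_2$) would not suffice.

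What you have, however, is a plan rather than a proof. The crux --- that the finitely many $\Z Q$-module generators of $\pi_2$ translate into finitely many words in the generators of $P$ which, together with the listed relators, imply \emph{every} relation of $N\times N$ --- is asserted with the correct mechanism but not carried out. Making it precise (converting a van Kampen diagram over $\langle X\mid R\rangle$ for $u(w_1,\dots,w_k)$ into a derivation of the corresponding relation among the $(1,t_i)$ from the finite list, by comparing the $R$-filling with the $T$-filling, reading off the resulting spherical element, and expressing it in terms of the chosen $\pi_2$-generators while tracking the conjugation relators) is the entire technical content of the theorem and occupies the bulk of \cite{bbms}. So your proposal is a faithful reconstruction of the strategy of the known proof, but it does not by itself constitute a proof; for the purposes of this paper that is immaterial, since the result is used here as a black box.
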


The version of the Rips construction that we need here is due to Haglund and Wise. 
The only thing that the reader needs to know
about {\em virtually special} groups is that they contain subgroups of finite
index that inject into right angled Artin groups. 

\begin{theorem}{\cite{HW}}\label{t:rips}
For every finitely presented group $Q$, there exists a virtually
special hyperbolic group $H$ and an epimorphism $H\to Q$ with finitely generated kernel.
\end{theorem}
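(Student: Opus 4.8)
The statement is the Haglund--Wise strengthening \cite{HW} of Rips's construction \cite{rips}, and the plan is to run Rips's argument while keeping enough control of the defining relators that the resulting group is not merely hyperbolic but satisfies a metric small-cancellation condition, so that Wise's cubulation theorem applies.

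First I would fix a finite presentation $Q=\langle a_1,\dots,a_n\mid r_1,\dots,r_m\rangle$ and write down the Rips presentation obtained by adjoining two generators $x,y$. For each $i$ impose the four relations $a_i^{\e}\,x\,a_i^{-\e}=U_{i,\e}$ and $a_i^{\e}\,y\,a_i^{-\e}=V_{i,\e}$ (with $\e=\pm1$), where $U_{i,\e},V_{i,\e}$ are positive words in $x,y$ still to be chosen; and for each $k$ impose $r_k=W_k$ with $W_k$ a positive word in $x,y$. Call the resulting group $H$, and let $N\trianglelefteq H$ be the normal closure of $\{x,y\}$. Because the relations of the first type express every conjugate $a_i^{\pm1}x^{\pm1}a_i^{\mp1}$ and $a_i^{\pm1}y^{\pm1}a_i^{\mp1}$ again as a word in $x,y$, the set $\{x,y\}$ already generates a subgroup of $H$ that is normal, so $N=\langle x,y\rangle$ is finitely generated (indeed $2$-generated). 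Killing $x$ and $y$ turns the first family of relations into trivial identities and the relations $r_k=W_k$ into $r_k=1$, so $H/N\cong Q$; thus $H\to Q$, $a_i\mapsto a_i$, $x,y\mapsto 1$, is the desired epimorphism with finitely generated kernel.

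The heart of the proof is the choice of the filling words $U_{i,\e},V_{i,\e},W_k$. One selects them as pairwise disjoint blocks cut from a single very long positive word in $x,y$, chosen so that the symmetrised set of all the defining relators satisfies the metric small-cancellation condition $C'(1/6)$ --- in fact $C'(\lambda)$ for any prescribed $\lambda>0$, by taking the filling words long enough. The point is that each $a_i$, and each occurrence of a letter in some $r_k$, appears in only boundedly many relators, while two distinct defining relators can overlap only inside their $x,y$-parts, where the genericity of the filling words forces any piece to be short relative to the relator length; carrying out this bookkeeping is the one genuinely technical step, and is precisely the classical delicate point in Rips's construction, now executed with the extra care needed to land in the small-cancellation (rather than merely hyperbolic) world. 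Granting it, Greendlinger's lemma shows $H$ is word-hyperbolic and torsion-free.

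Finally, a $C'(1/6)$ presentation yields a proper cocompact action of $H$ on the CAT(0) cube complex dual to its system of small-cancellation walls, and by Wise's theorem such groups are virtually compact special; hence $H$ is virtually special, which completes the proof. The main obstacle, as indicated, is the purely combinatorial one: arranging the filling words so that no short relator and no unintended overlap violates $C'(1/6)$. Everything else --- the computation of the kernel, the identification of the quotient with $Q$, and the passage from small cancellation to virtual specialness --- is then either routine or quotable.
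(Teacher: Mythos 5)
The paper offers no proof of this statement: it is imported verbatim from Haglund--Wise \cite{HW}, so there is no internal argument to compare against, and your sketch should be judged as a reconstruction of theirs. As such it is essentially correct but takes a genuinely different route at the decisive step. The first two thirds --- the presentation of $H$ with conjugation relations $a_i^{\e}xa_i^{-\e}=U_{i,\e}$, $a_i^{\e}ya_i^{-\e}=V_{i,\e}$ and $r_k=W_k$, the observation that the kernel is $\langle x,y\rangle$ and hence $2$-generated, the identification $H/N\cong Q$, and the choice of long generic positive filling words making the symmetrised presentation $C'(1/6)$ --- is exactly Rips's construction \cite{rips} and is fine. Where you diverge is the passage from $C'(1/6)$ to virtual specialness: ``a $C'(1/6)$ presentation yields a proper cocompact action on the dual CAT(0) cube complex'' is indeed Wise's cubulation of small cancellation groups, but ``by Wise's theorem such groups are virtually compact special'' quotes a result (every finitely presented $C'(1/6)$ group, or every cocompactly cubulated hyperbolic group, is virtually special) that is much deeper than, and historically later than, \cite{HW} itself --- it rests on Wise's quasiconvex hierarchy machinery or on Agol's theorem \cite{agol}. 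That route is logically valid with today's literature, but it is not how Haglund and Wise prove the theorem: they run the construction (following Wise's earlier residually finite Rips construction) so that $H$ is by inspection the fundamental group of a compact nonpositively curved square complex whose hyperplanes can be checked directly to avoid the four pathologies in the definition of special, so virtual specialness is verified by hand rather than deduced from a general cubulation-implies-special theorem. Your approach buys brevity at the cost of a far heavier black box; theirs is self-contained and effective. One small point worth keeping from either route: the group $H$ comes out torsion-free (no relator is a proper power), which the statement above does not record but which the paper silently needs later when it feeds $\G$ into Lemma \ref{l1}.
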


\subsection{The Membership Problem}

\begin{lemma}\label{l:memb}
Let $1\to N\to \G \overset{p}\to Q\to 1$ be a short exact sequence of groups, with $\G$
finitely generated, and let
$P<\G\times\G$ be the associated fibre product. If the word problem in $Q$ is unsolvable, then
the membership problem for $P<\G\times\G$ is unsolvable.
\end{lemma}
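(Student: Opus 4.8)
The plan is to reduce the word problem in $Q$ to the membership problem for $P$ in $\G\times\G$. First I would fix a finite generating set for $\G$ and note that $p$ sends the generators of $\G$ to a generating set of $Q$; hence any word $w$ in the generators of $Q$ can be lifted algorithmically to a word $\widetilde w$ in the generators of $\G$ with $p(\widetilde w)=w$ in $Q$. The key observation is that the element $(\widetilde w, 1)\in\G\times\G$ lies in the fibre product $P=\{(x,y)\mid p(x)=p(y)\}$ if and only if $p(\widetilde w)=p(1)=1$ in $Q$, i.e.\ if and only if $w=1$ in $Q$.

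So the main steps are: (1) given a word $w$ over the generators of $Q$, compute a lift $\widetilde w$ over the generators of $\G$; (2) form the word $(\widetilde w, 1)$ over the generators of $\G\times\G$; (3) apply the hypothetical membership algorithm for $P<\G\times\G$ to this word. If such a membership algorithm existed, composing it with steps (1)--(2) would decide whether $w=1$ in $Q$, contradicting the unsolvability of the word problem in $Q$. Therefore the membership problem for $P<\G\times\G$ is unsolvable.

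I expect the only real point requiring care — the ``main obstacle,'' such as it is — is making sure all the reductions are genuinely effective and that $P$ and $\G\times\G$ are finitely generated so that the membership problem is even well-posed: $\G$ is finitely generated by hypothesis, so $\G\times\G$ is, and $P$ contains $N\times 1$ together with the diagonal image of a generating set of $\G$, so $P$ is finitely generated as well (this is the standard fact underlying the fibre-product construction). One should also check the trivial direction of the biconditional in step~(1): if $w=1$ in $Q$ then $\widetilde w\in N$, so $(\widetilde w,1)\in N\times 1\subseteq P$; conversely if $(\widetilde w,1)\in P$ then $p(\widetilde w)=1$, i.e.\ $w=1$ in $Q$. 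Everything else is bookkeeping: the reduction is a composition of computable maps, so unsolvability of the word problem in $Q$ transfers directly to unsolvability of the membership problem for $P$.
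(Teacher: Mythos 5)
Your proposal is correct and is essentially the paper's own argument: both reduce the word problem in $Q$ to the membership problem by observing that a word $w$ in the generators of $\G$ yields an element $(w,1)$ of $\G\times\G$ lying in $P$ if and only if $p(w)=1$ in $Q$. Your extra remarks about finite generation of $P$ and the effectiveness of the lift are harmless but not needed, since the membership problem as posed only requires $\G\times\G$ to be finitely generated and unsolvability of the word problem in $Q$ is independent of the chosen generating set.
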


\begin{proof} We fix a finite generating set $X$ for $\G$ and work with the generators
$X'=\{(x,1), (1,x)\mid x\in X\}$ for $\G\times\Gamma$. Given a word $w=x_1\dots x_n$ in the free group on $X$, 
we consider the word $(x_1,1)\dots (x_n,1)$ in the free group on $X'$.
 This word defines an element of $P$ if and only if $p(w)=1$ in $Q$, and we are assuming that
 there is no algorithm that
 can determine which words in the symbols $p(x)$ equal the identity in $Q$.
 \end{proof}

\begin{theorem}\label{t:memb}
There exists a right angled Artin group $A$ and a finitely presented subgroup $P_0<A$
for which the membership problem is unsolvable.
\end{theorem}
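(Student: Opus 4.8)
The plan is to feed a suitably pathological group into the Haglund--Wise Rips construction, apply the 1--2--3 Theorem and Lemma \ref{l:memb} to the resulting fibre product, and then pass to a finite-index subgroup that embeds in a RAAG. Begin by fixing a finitely presented group $Q$ that is of type $F_3$ and has an unsolvable word problem; such groups are classical (one may even take $Q$ of type $F_\infty$). Applying Theorem \ref{t:rips} to $Q$ produces a virtually special hyperbolic group $H$ together with a short exact sequence $1\to N\to H\overset{p}\to Q\to 1$ in which $N$ is finitely generated. Choose a finite-index subgroup $H_0<H$ that injects into a right angled Artin group; since $H_0$ is finitely generated, after replacing that RAAG by the sub-RAAG spanned by a finite set of vertices carrying the image of a generating set of $H_0$, we may assume $H_0$ embeds in a finitely generated RAAG $A'$.

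Now restrict the sequence: put $Q_0=p(H_0)\le Q$ and $N_0=N\cap H_0=\ker(p|_{H_0})$, so that $1\to N_0\to H_0\overset{p}\to Q_0\to 1$ is exact. Every property needed downstream survives this restriction: $N_0$ has finite index in the finitely generated group $N$, so it is finitely generated; $H_0$ has finite index in the finitely presented (indeed hyperbolic) group $H$, so it is finitely presented; and $Q_0$ has finite index in $Q$, so it is again of type $F_3$ and its word problem is still unsolvable, since insolubility is inherited by finite-index subgroups. Let $P_0=\{(x,y)\in H_0\times H_0 : p(x)=p(y)\}$ be the fibre product of $p|_{H_0}$. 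By the 1--2--3 Theorem $P_0$ is finitely presented, and by Lemma \ref{l:memb}, applied to the sequence $1\to N_0\to H_0\to Q_0\to 1$ whose quotient has unsolvable word problem, the membership problem for $P_0$ in $H_0\times H_0$ is unsolvable.

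Finally, observe that $H_0\times H_0$ embeds in $A'\times A'$, and that $A:=A'\times A'$ is itself a right angled Artin group, its defining graph being the join of two copies of the graph defining $A'$. Thus $P_0$ is a finitely presented subgroup of the RAAG $A$, sitting in the chain of finitely generated groups $P_0<H_0\times H_0<A$. By Remark \ref{r:HAG}, a solution to the membership problem for $P_0$ in $A$ would yield a solution to the membership problem for $P_0$ in $H_0\times H_0$; as the latter is unsolvable, so is the former, and $A$, $P_0$ are as required. All the conceptual weight here has already been carried by Theorem \ref{t:rips}, the 1--2--3 Theorem and Lemma \ref{l:memb}; the one point demanding real care is the simultaneous persistence, under the passage from $H\to Q$ to the restricted sequence $H_0\to Q_0$, of all four hypotheses needed at once (finitely generated kernel, finitely presented total group, type-$F_3$ quotient, and unsolvable word problem in the quotient). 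The remaining steps — arranging $A'$ to be finitely generated and checking that $A'\times A'$ is a RAAG so that Remark \ref{r:HAG} applies — are routine.
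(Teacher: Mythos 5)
Your proposal is correct and follows essentially the same route as the paper's proof: feed a type-$F_3$ group with unsolvable word problem into the Haglund--Wise Rips construction, pass to a finite-index subgroup embedding in a RAAG, restrict the short exact sequence, apply the 1--2--3 Theorem and Lemma \ref{l:memb}, and finish with Remark \ref{r:HAG} and the observation that a product of RAAGs is a RAAG. The extra care you take over finite generation of $A'$ and the persistence of the four hypotheses under restriction is sound but matches the paper's argument in substance.
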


\begin{proof}
Let $Q$ be a group of type $F_3$ that has an unsolvable word problem. (Collins and Miller \cite{CM}
gave explicit constructions of such groups.) We construct a short exact sequence
$1\to N\to \G\overset{p}\to Q\to 1$ as in Theorem \ref{t:rips}. Since $\G$ is virtually special,
it contains a subgroup of finite index $\G_0<\G$ that is a subgroup of a right angled Artin group.
Let $Q_0=p(\G_0)$ and let $N_0=\G_0\cap N$. Then we have a short exact sequence $1\to N_0
\to \G_0\to Q_0\to 1$. A subgroup of finite index in a group of type $F_n$ is again of type $F_n$,
so $N_0$ is finitely generated, $\G_0$ is finitely presented, and $Q_0$ is of type $F_3$. 
Thus the 1-2-3 Theorem implies that the fibre product $P_0<\G_0\times\G_0$ associated to this
sequence is finitely presented.

A subgroup of finite index in a group with an unsolvable word problem again has an unsolvable word
problem, so by Lemma \ref{l:memb} the membership problem for $P_0<\G_0\times\G_0$  is unsolvable.
Since $\G_0$ embeds in a right angled Artin group, $R$ say, $\G_0\times\G_0$ embeds in
the right angled Artin group $A=R\times R$.
Remark \ref{r:HAG} completes the proof.
\end{proof}

\subsection{The Conjugacy Problem}

The following observation is an easy consequence of the fact that centralisers of non-trivial
elements in hyperbolic groups are virtually cyclic.

\begin{lemma}\label{l1} Let $1\to N\to \G \to Q\to 1$ be a short exact sequence of groups with $\G$ 
torsion-free and hyperbolic. Let $P<\G\times\G$ be the associated fibre product.
Suppose that $Q$ is torsion-free and that $a\in N$ is non-trivial.
Then the centralizer of $(a,a)$ in $\G\times\G$ is contained in $P$.
\end{lemma}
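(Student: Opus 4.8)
The plan is to analyze the centralizer of $(a,a)$ in $\G \times \G$ by working one coordinate at a time and using the constraint that defines the fibre product $P$. First I would observe that the centralizer of $(a,a)$ in $\G \times \G$ is precisely $C_\G(a) \times C_\G(a)$, so it suffices to show that whenever $g, h \in \G$ both centralize $a$, the pair $(g,h)$ lies in $P$; equivalently, that $p(g) = p(h)$ in $Q$. Since $\G$ is torsion-free hyperbolic and $a \neq 1$, the centralizer $C_\G(a)$ is infinite cyclic, generated by some element $t$ with $a = t^k$ for a unique $k \geq 1$ (this is the standard fact that maximal cyclic subgroups are malnormal-like / that roots are unique in torsion-free hyperbolic groups). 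Thus every element of $C_\G(a)$ is a power of $t$, and in particular $g = t^m$, $h = t^n$ for some integers $m, n$.

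Next I would use the short exact sequence to pin down $p(t)$. Since $a \in N = \ker p$, we have $p(t)^k = p(t^k) = p(a) = 1$, so $p(t)$ is an element of $Q$ of order dividing $k$. But $Q$ is torsion-free, so $p(t) = 1$, i.e. $t \in N$. Consequently $p(g) = p(t^m) = 1 = p(t^n) = p(h)$, which is exactly the condition $(g,h) \in P$. This shows $C_{\G \times \G}((a,a)) = C_\G(a) \times C_\G(a) \subseteq P$, as claimed.

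The only real point requiring care is the structure of $C_\G(a)$: one must invoke that in a torsion-free hyperbolic group the centralizer of any nontrivial element is infinite cyclic, and that a nontrivial element has a well-defined maximal root. These are standard consequences of the fact that centralizers of infinite-order elements in hyperbolic groups are virtually cyclic, combined with torsion-freeness. I anticipate no genuine obstacle here — the argument is short — but I would state the cyclic-centralizer fact explicitly since it is the hinge of the proof, and I would be slightly careful that the conclusion is about containment in $P$ (not equality), so no claim about $C_\G(a)$ being nontrivial or about $(a,a)$ having large centralizer is needed.
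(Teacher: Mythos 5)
Your proof is correct and follows exactly the route the paper intends: the paper gives no written proof, only the remark that the lemma is "an easy consequence of the fact that centralisers of non-trivial elements in hyperbolic groups are virtually cyclic," and your argument (identify $C_{\G\times\G}((a,a))$ with $C_\G(a)\times C_\G(a)$, note that torsion-freeness makes $C_\G(a)$ infinite cyclic generated by a root $t$ of $a$, and use torsion-freeness of $Q$ to force $t\in N$, hence $C_\G(a)\le N$ and the centralizer lies in $N\times N\subseteq P$) is precisely the intended expansion of that remark.
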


Our interest in this observation is explained by the following lemma. This is Lemma 3.3 from \cite{bbms};
we reproduce the proof here for the convenience of the reader.

\begin{lemma}\label{l2} Let $H\subset P \subset G$ be finitely generated groups. Suppose that $H$ is normal
in $G$ and that there exists $a\in H$ whose centralizer $C_G(a)$ is contained in $P$. If there
is no algorithm to decide which words in the generators of $G$ belong to $P$, then $P$ has an unsolvable conjugacy problem.
\end{lemma}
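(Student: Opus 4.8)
The plan is to argue by contraposition: assuming a conjugacy algorithm for $P$, I would build an algorithm deciding membership in $P$, contradicting the hypothesis. The key observation is that the element $a\in H$ is a fixed ``test element'' whose $G$-conjugates can be identified inside $P$ using normality of $H$ together with the centralizer condition $C_G(a)\subseteq P$.

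First I would record the structural fact underlying the whole argument: for $g\in G$, the conjugate $g^{-1}ag$ lies in $H$ (since $H\trianglelefteq G$), and moreover $g^{-1}ag$ is conjugate to $a$ \emph{within $P$} if and only if $g\in P$. One direction is immediate. For the other, suppose $p^{-1}(g^{-1}ag)p = a$ for some $p\in P$; then $gp\in C_G(a)\subseteq P$, and since $p\in P$ this forces $g\in P$. This equivalence is the heart of the matter.

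Next I would assemble the algorithm. Given a word $w$ in the generators of $G$, representing an element we call $g$, form the two words $w^{-1}aw$ and $a$ (written in the generators of $G$; here we use that $a$, being a fixed element of the finitely generated group $G$, has a fixed word representative). Feed these to the hypothetical conjugacy algorithm for $P$. Strictly, the conjugacy algorithm for $P$ expects inputs in the generators of $P$; but since $P$ is finitely generated and $a\in H\subseteq P$ together with the conjugate $w^{-1}aw$ (which lies in $H\subseteq P$ whenever it is considered) can be rewritten in the generators of $P$ once we know they lie in $P$ — and $w^{-1}aw$ always lies in $H$ by normality, hence in $P$ — we can carry out this translation. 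The conjugacy algorithm then answers ``yes'' precisely when $g\in P$, by the equivalence of the previous paragraph, so we have decided membership of $g$ in $P$: contradiction.

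The main obstacle, and the point needing the most care, is the bookkeeping about generating sets in the previous paragraph: the conjugacy problem for $P$ is posed in terms of generators of $P$, whereas $w$ is given in terms of generators of $G$. The resolution is that $w^{-1}aw$ represents an element of $H\subseteq P$ by normality of $H$ in $G$, so one may effectively (by enumeration, searching in parallel) find a word in the generators of $P$ representing the same element — this search terminates precisely because membership is being assumed \emph{for the sake of contradiction} only in the \emph{conclusion}, while here we merely need that $H\subseteq P$, which holds unconditionally. Once both inputs are expressed over the generators of $P$, the assumed conjugacy algorithm applies verbatim. No further hypotheses on $G$, $P$ or $H$ are needed beyond those stated.
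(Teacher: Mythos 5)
Your reduction is the right one, and its logical core --- that $w^{-1}aw$ is conjugate to $a$ inside $P$ if and only if $w\in P$, proved by observing that a conjugating element $p$ forces $wp\in C_G(a)\subseteq P$ --- is exactly the argument in the paper. The gap is in the step you yourself flag as needing the most care: converting $w^{-1}aw$ into a word over the generators of $P$ so that the hypothetical conjugacy algorithm can be applied. Your proposed fix is an unbounded parallel search through words $v$ in the generators of $P$, but you never say how a candidate $v$ is to be \emph{verified}, i.e.\ how the algorithm recognises that $v$ and $w^{-1}aw$ define the same element of $G$. That requires at least a recursive enumeration of the relations of $G$ (equivalently, that $G$ be recursively presented), which is not among the hypotheses: the lemma assumes only that $G$ is finitely generated. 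Your stated justification --- that the search works because $H\subseteq P$ holds unconditionally --- addresses the wrong issue: the existence of a correct $v$ is not in doubt; the problem is that without a positive test for equality in $G$ the search can never confirm that it has found one. (Solvability of the conjugacy problem for $P$ does give you a solvable word problem for $P$, but the equality $v=w^{-1}aw$ has one side written over the generators of $G$, so that does not help.)

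The paper circumvents this with a purely syntactic device that you should adopt: fix finite generating sets $B$ of $G$ and $A$ of $H$, and fix \emph{in advance}, for each $b\in B$ and $\e=\pm1$ (and, to make the rewriting iterable, for each generator of $H$ in place of $a$), a word $u_{b,\e}$ over $A$ equal in $H$ to the conjugate $b^{\e}ab^{-\e}$. Such words exist because $H$ is normal in $G$; they constitute a finite amount of data that may be hard-wired into the algorithm, so they need only exist, not be computed. Pushing the letters of $w$ through $a$ one at a time using these rules converts $waw^{-1}$ algorithmically into a word $w'$ over $A$, hence over generators of $P$, with no appeal to any word problem. With that replacement your argument goes through and coincides with the paper's.
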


\begin{proof} Fix finite generating sets $B$ for $G$ and $A$ to $H$. For each $b\in B$ and
$\e=\pm 1$, let $u_{b,\e}$ be a word in the free group on $A$ so that $b^\e ab^{-\e}=u_{b,\e}$ in $H$.

Given an arbitrary word $w$ in the free group on $B$, one can use the relations $b^\e ab^{-\e}=u_{b,\e}$ 
to convert $waw^{-1}$ into a word $w'$ in the letters $A$. (The length of $w'$ is bounded by an exponential
function of the length of $w$ and the rewriting process is entirely algorithmic.)

Now we ask if $w'$ is conjugate to $a$ in $P$. If there were to exist $h\in P$ such that $h^{-1}ah=w'$
then $hw\in C_G(a)\subset P$, whence $w\in P$. Thus $w'$ is conjugate to $a$ in $P$ if and only if $w\in P$,
and we are assuming that there is no algorithm that can determine in $w$ belongs to $P$.
\end{proof}

\begin{theorem}\label{t:conj}
The finitely presented group $P_0<A$ constructed in the proof of Theorem \ref{t:memb} has
an unsolvable conjugacy problem.
\end{theorem}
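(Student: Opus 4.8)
The plan is to verify that the group $P_0 < A$ constructed in the proof of Theorem~\ref{t:memb} satisfies the hypotheses of Lemma~\ref{l2}, with $G = \G_0 \times \G_0$, $P = P_0$, and $H$ a suitable normal subgroup of $G$ containing an element whose centralizer sits inside $P_0$. Since Lemma~\ref{l:memb} (applied in the proof of Theorem~\ref{t:memb}) already tells us the membership problem for $P_0 < \G_0\times\G_0$ is unsolvable, the only remaining task is to produce the element $a$ with $C_G(a) \subseteq P_0$ and a normal subgroup $H$ of $G$ with $a \in H \subseteq P_0$. This is exactly what Lemma~\ref{l1} is designed to supply, modulo checking its hypotheses.

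First I would recall that the hyperbolic group $H$ produced by the Haglund--Wise Rips construction (Theorem~\ref{t:rips}) may be taken to be torsion-free: one can pass to a torsion-free finite-index subgroup, or simply observe that the standard Rips-type constructions can be arranged to output a torsion-free hyperbolic group. Then $\G_0 < \G$ is torsion-free and hyperbolic, being a finite-index subgroup. Next I would arrange that $Q_0 = p(\G_0)$ is torsion-free; since $Q = Q_0$ can be taken torsion-free from the outset (the Collins--Miller groups of type $F_3$ with unsolvable word problem can be chosen torsion-free, or one replaces $Q$ by a torsion-free group with the same property), and a subgroup of a torsion-free group is torsion-free, this holds. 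One also needs the kernel $N_0 = \G_0 \cap N$ to be non-trivial, which is immediate since $N$ is infinite (it is finitely generated and infinite, being the kernel of a map onto an infinite group with $\G$ of cohomological dimension that forces $N$ infinite) and of finite index in $N_0$ inside $N$ — more directly, $N_0$ has finite index in $N$ so is infinite, hence contains a non-trivial element $a$.

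With these points in place, Lemma~\ref{l1} applied to the short exact sequence $1 \to N_0 \to \G_0 \overset{p}\to Q_0 \to 1$ gives: for non-trivial $a \in N_0$, the centralizer of $(a,a)$ in $\G_0 \times \G_0$ is contained in $P_0$. Now take $H = N_0 \times N_0$; this is normal in $G = \G_0\times\G_0$ because $N_0$ is normal in $\G_0$, it contains $(a,a)$, and it is contained in $P_0$ since $(n_1, n_2) \in N_0\times N_0$ has $p(n_1) = 1 = p(n_2)$. Finally, $H = N_0\times N_0$ is finitely generated because $N_0$ is. Thus all hypotheses of Lemma~\ref{l2} are met — the element $(a,a) \in H$ has $C_G((a,a)) \subseteq P_0 = P$, $H \trianglelefteq G$, and membership of $P_0$ in $G$ is undecidable — and Lemma~\ref{l2} yields that $P_0$ has an unsolvable conjugacy problem.

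The main obstacle, such as it is, is bookkeeping rather than mathematics: one must make sure that the finite-index passage from $\G$ to $\G_0$ in the proof of Theorem~\ref{t:memb} is compatible with the torsion-freeness and torsion-freeness-of-quotient hypotheses of Lemma~\ref{l1}, i.e.\ that we really may assume $\G$ torsion-free and $Q$ torsion-free without disturbing the construction. I would handle this by remarking at the start of the proof that both assumptions are harmless — the Haglund--Wise construction can be taken torsion-free, and Collins--Miller supply torsion-free examples of type $F_3$ with unsolvable word problem (alternatively, one forms a suitable torsion-free overgroup) — and that the finite-index subgroups $\G_0, N_0, Q_0$ inherit torsion-freeness automatically. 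After that the proof is a direct invocation of Lemmas~\ref{l1} and~\ref{l2}.
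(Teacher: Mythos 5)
Your proof is correct and follows essentially the same route as the paper: apply Lemma~\ref{l1} to get a non-trivial element of $N_0\times N_0$ whose centralizer in $\G_0\times\G_0$ lies in $P_0$, then combine Lemma~\ref{l2} with the unsolvable membership problem from Theorem~\ref{t:memb}. The only cosmetic difference is that the paper deduces torsion-freeness of $Q$ from its aspherical presentation (cohomological dimension $2$) rather than asserting it, and torsion-freeness of $\G_0$ is automatic since $\G_0$ embeds in a right angled Artin group.
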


\begin{proof} In the proof of Theorem \ref{t:memb},
 the input group $Q$ had an aspherical presentation,
in other words it was the
 fundamental group of a compact 2-complex with contractible universal cover. A
cyclic group has cohomology in infinitely many dimensions and therefore cannot act freely on a finite-dimensional
contractible complex. It follows that $Q$ is torsion-free. Thus we are in the situation of Lemma \ref{l1}, and by
applying Lemma \ref{l2} with $N\times N$ in the role of $H$, we conclude from Theorem \ref{t:memb}
that $P_0$ has an unsolvable conjugacy problem.
\end{proof}
 
This concludes the proof of Theorem \ref{t:b'}.
 
\section{The Isomorphism problem}

In general, determining whether a class of groups has a solvable isomorphism problem is much
harder than considerations of the word and conjugacy problem. But with the virtually special version of
the Rips construction in hand, we can appeal directly to the following criterion of
Bridson and Miller \cite{BM1}. 

\begin{theorem}{\cite{BM1}}\label{t:bm} Let $1\to N\to \G\to L\to 1$ be an exact 
sequence of groups. Suppose that 
\begin{enumerate}
\item $\G$ is torsion-free and hyperbolic,
\item $N$ is infinite and finitely generated, and 
\item $L$ is a non-abelian free group.
\end{enumerate}
If $F$ is a non-abelian free group, then the isomorphism problem for
finitely presented subgroups of $\G\times\G\times F$ is unsolvable. 

In more detail, there is a recursive sequence $\Delta_i\ (i\in\N)$ of
finite subsets of $\G\times\G\times F$, together with finite presentations 
$\< \Delta_i \mid \Theta_i\>$ of the subgroups they generate, such that 
there is no algorithm that can determine 
whether or not $\<\Delta_i\mid\Theta_i\>\cong \<\Delta_0 \mid \Theta_0\>$.
\end{theorem}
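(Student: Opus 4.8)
The plan is to reduce the isomorphism problem for subgroups of $\Gamma\times\Gamma\times F$ to an undecidable predicate about a recursively enumerated family of finitely presented groups, and then to realise that family inside $\Gamma\times\Gamma\times F$ in such a way that finite presentability is forced by the freeness of $L$ while the isomorphism type of the subgroup remembers the member of the family. First I would fix a free basis of $L$ and invoke a classical construction of Adian--Rabin type, in the effective form underlying the Collins--Miller groups used in the proof of Theorem~\ref{t:memb}, to produce a recursive sequence of finite sets of relators $R_i\subset L$ for which the marked quotients $Q_i=L/\langle\langle R_i\rangle\rangle$ satisfy: there is no algorithm to decide whether $Q_i\cong Q_0$, where $Q_0$ is trivial. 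Presenting the $Q_i$ as marked quotients of the \emph{fixed} free group $L$ is precisely what lets one couple them to the given sequence $1\to N\to\Gamma\to L\to 1$; the hypotheses that $L$ be free and that $\Gamma$ be torsion-free hyperbolic then play complementary roles, the former securing finite presentability and the latter securing rigidity.

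Next I would construct, algorithmically from $R_i$, a finitely presented subgroup $S_i<\Gamma\times\Gamma\times F$ together with an explicit finite presentation $\langle\Delta_i\mid\Theta_i\rangle$. The essential design point is that one must fibre over the free group $L$ and never over $Q_i$: the naive fibre product $\Gamma\times_{Q_i}\Gamma$ lies outside the scope of the 1-2-3 Theorem, because the kernel of the composite $\Gamma\to L\to Q_i$ is the infinitely generated group $p^{-1}(\langle\langle R_i\rangle\rangle)$ whenever $Q_i$ is infinite. By contrast the kernel of $\Gamma\to L$ is the given finitely generated group $N$, and $L$, being free, is of type $F_\infty$, so the plain fibre product $P=\Gamma\times_L\Gamma$ is finitely presented by the 1-2-3 Theorem. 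The family is then obtained by enriching $P$ using the third free factor $F$, whose generators are attached through elements of $\Gamma\times\Gamma$ that encode the relators $R_i$; because $F$ is free, such data always extends to a homomorphism, so the enriched groups remain finitely presented, and the effective form of the 1-2-3 Theorem outputs the presentations $\langle\Delta_i\mid\Theta_i\rangle$ recursively in $i$. Arranging this enrichment so that it is simultaneously finitely presentable \emph{and} faithful to the isomorphism type of $Q_i$ is itself a delicate part of the construction.

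Finally I would prove the detection statement $S_i\cong S_0\iff Q_i\cong Q_0$, from which the theorem follows at once: since the predicate on the right is undecidable and $i\mapsto\langle\Delta_i\mid\Theta_i\rangle$ is recursive, no algorithm can decide $\langle\Delta_i\mid\Theta_i\rangle\cong\langle\Delta_0\mid\Theta_0\rangle$. The forward implication is functorial, an isomorphism $Q_i\cong Q_0$ being engineered to induce one of the subgroups. The reverse implication is the heart of the matter and the step I expect to be the main obstacle: one must show that an \emph{abstract} isomorphism $S_i\to S_0$ is forced to be structure-preserving, carrying the canonical fibre kernel (a copy of $N\times N$), the diagonal copies of $\Gamma$, and the free direction to their counterparts, so that it descends to an isomorphism of the quotients $Q_i\cong Q_0$. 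This rigidity is exactly where torsion-freeness and hyperbolicity of $\Gamma$ are indispensable: because centralisers of nontrivial elements in $\Gamma$ are cyclic and $\Gamma$ contains no unexpected normal or abelian subgroups, the structure theory of subdirect products identifies these canonical pieces intrinsically, leaving an abstract isomorphism no room to scramble them. Making this identification precise, and controlling the finitely many residual ways an isomorphism could otherwise act, is the substantial technical core of the argument.
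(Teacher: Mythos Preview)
The paper does not actually prove this theorem. It is quoted from \cite{BM1}, and immediately after stating it the paper says: ``The proof of this theorem does not lend itself to an easy summary, so we refer the reader to \cite{BM1} for details.'' There is therefore no in-paper argument to compare your proposal against.

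For what it is worth, your outline is in the spirit of the Bridson--Miller argument: one does work with a recursive family of marked quotients $Q_i$ of the fixed free group $L$ for which triviality is undecidable, one does fibre over $L$ (not over $Q_i$) so that the 1-2-3 Theorem applies, one does use the extra free factor $F$ to carry the data of the relators $R_i$, and the rigidity step really is driven by the centraliser structure of torsion-free hyperbolic groups. You have also correctly located the two genuine difficulties. Where your sketch remains a sketch is precisely at the point you flag as ``delicate'': the phrase ``enriching $P$ using the third free factor $F$, whose generators are attached through elements of $\Gamma\times\Gamma$ that encode the relators $R_i$'' does not commit to a specific subgroup $S_i$, and until that group is pinned down one cannot even formulate, let alone prove, the detection statement $S_i\cong S_0\iff Q_i\cong Q_0$. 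So what you have written is a correct identification of the architecture of the proof and of the roles played by each hypothesis, but not a proof; for the actual construction and the rigidity argument you, like the paper, would need to go to \cite{BM1}.
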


The proof of this theorem does not lend itself to an easy summary,
so we refer the reader to \cite{BM1} for details. 

\subsection*{Proof of Theorem \ref{t:a'}} 
By applying the Haglund-Wise version of the Rips construction with
$Q$ a finitely generated 
non-abelian free group, we obtain a short exact sequence
$1\to N\to \G\to Q\to 1$ with $N$ finitely generated
and $\G$ hyperbolic and virtually special. 
We pass to a subgroup of finite index $\G_0<
G$ that embeds in a right angled Artin group $A$ and consider
the sequence $1\to N_0\to \G_0\to Q_0\to 1$ as in the previous
section. $N$ is finitely
generated and $Q_0$ is a finitely generated 
non-abelian free group. Let $F$ be a finitely
generated free group. Theorem \ref{t:bm} tells us that 
the isomorphism problem is unsolvable among the finitely presented
subgroups of $\G_0\times\G_0\times F$, which is a subgroup of the
right angled Artin group $A\times A\times F$.
\qed

\section{Embedding Virtual RAAGs in Mapping Class Groups}\label{s:raags}

A {\em right angled Artin group} (RAAG) is a group given by a presentation of the form
$$
A = \< v_1,\dots,v_n \mid [v_i,v_j] = 1 \,\forall (i,j)\in E\>.
$$
Thus $A$ is encoded by the finite graph with vertex set $\{1,\dots,n\}$ and edge set $E$. Such
groups arise naturally throughout mathematics whenever one has some automorphisms of an object,
some of which commute but which are otherwise unrelated. One such setting is that of surface automorphisms:
if two simple closed curves on a surface are disjoint, then the Dehn twists in those curves commute,
but if one has a set curves, no pair of which can be homotoped off each other, then high powers of
the twists in those curves freely generate a free group. 
(Significantly sharper results of 
this sort are proved in \cite{koberda} and \cite{CLM}.)
 It follows that any RAAG can be embedded in the
mapping class group of any surface $S$ of sufficiently high genus: it suffices that the dual of the
graph defining $A$ can be embedded in $S$. (This is explained by
Crisp and Wiest in \cite{CW}; I do not know if it appears in the literature earlier.)

The surface $S$ need not be closed. Indeed if one has a RAAG $A$
generated by powers of some Dehn twists, then
one can puncture the surface or delete discs in the complement of the curves supporting the
generators. The pattern of intersections of the curves on the modified surface $S'$ is unaltered,
so the commuting relations between the generating twists remain and the 
subgroup that they generate $A'<\mods'$ is isomorphic to $A$,
because the restriction of the natural homomorphism $\mods\to\mod$ provides 
an inverse to the homomorphism $A\to A'$ defined by sending the generating twists to themselves.

Thus one can embed an arbitrary RAAG in the mapping class group of a compact surface with boundary.
To embed virtual RAAGs we appeal to the following construction from \cite{mb:ddh}.

Recall that the wreath product 
$A\wr B$ of groups is the semidirect product $B\ltimes\oplus_{b\in B} A_b$, where the
$A_b$ are isomorphic copies of $A$ permuted by left translation. If $K<H$ is a normal subgroup
of finite index, then there is a standard embedding of $H$ into $K\wr (H/K)$, so if one has
a representation of $K$ into the mapping class group of a surface with boundary then one can obtain
an action of $K\wr (H/K)$ as follows. 

\begin{prop}\label{l:sg} If $S$ is a compact surface with non-empty boundary and 
$G$ is a finite group, then there is a closed surface $S_g$ and a monomorphism
${\rm{Mod}}(S)\wr G \to {\rm{Mod}}(S_{g})$.
\end{prop}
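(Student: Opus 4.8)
The plan is to realise the wreath product geometrically, placing $|G|$ pairwise disjoint copies of $S$ inside one closed surface $\Sigma$ that carries a symmetry permuting the copies according to the left regular representation of $G$. To build $\Sigma$, recall first that every finite group is a quotient of a surface group — for instance $\pi_1(\Sigma_\gamma)$ surjects onto $F_\gamma$ by killing half of its standard generators, and $F_\gamma$ surjects onto $G$ as soon as $\gamma$ is at least the number of generators of $G$ — so $G$ acts freely on a closed surface $R$ of genus $\ge 2$, namely the regular cover attached to such a quotient. If $S$ has $b$ boundary components I would delete from $R$ a $G$-invariant family of $b\,|G|$ open discs, forming $b$ free $G$-orbits, to get a compact surface $R'$ on which $G$ still acts freely and whose boundary circles form $b$ orbits each permuted simply transitively; label them $\partial_{j,h}$ ($1\le j\le b$, $h\in G$). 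After fixing an identification of collars of $\partial_{1,e},\dots,\partial_{b,e}$ with collars of the boundary components of $S$ and transporting it by $G$, I would glue a copy $S_h$ of $S$ onto $R'$ along $\partial_{1,h}\cup\dots\cup\partial_{b,h}$, for every $h\in G$. The result is a closed surface $\Sigma$ with $\Sigma\setminus\bigsqcup_h S_h=R'$, on which the free $G$-action on $R'$ extends over the gluings to a free $G$-action sending $S_h$ to $S_{gh}$ by the chosen identifications.

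Next I would define $\Phi\colon{\rm{Mod}}(S)\wr G\to{\rm{Mod}}(\Sigma)$. Representing mapping classes of $S$ by homeomorphisms that are the identity near $\partial S$, send the $h$-th summand of $\bigoplus_{k\in G}{\rm{Mod}}(S)_k$ into ${\rm{Mod}}(\Sigma)$ by extending homeomorphisms supported on $S_h$ by the identity elsewhere. As the $S_h$ are disjoint these subgroups commute, so together they give a homomorphism $\bigoplus_k{\rm{Mod}}(S)_k\to{\rm{Mod}}(\Sigma)$, which I combine with the homomorphism $G\to{\rm{Mod}}(\Sigma)$ coming from the $G$-action on $\Sigma$. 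Since that action carries $S_h$ to $S_{gh}$ by exactly the identifications used to define $\Phi$, a short check gives the semidirect-product relation $\Phi(g)\,\Phi(\mathbf m)\,\Phi(g)^{-1}=\Phi(g\cdot\mathbf m)$ for $\mathbf m\in\bigoplus_k{\rm{Mod}}(S)_k$, so $\Phi$ is a well-defined homomorphism out of ${\rm{Mod}}(S)\wr G$.

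The content of the proof is the injectivity of $\Phi$, and this is where I expect the real work to lie. When $S$ is a disc the statement is trivial (${\rm{Mod}}(S)\wr G\cong G$, $\Sigma=R$, and $G\hookrightarrow{\rm{Mod}}(R)$ because a non-trivial element of the free action has no fixed point while any homeomorphism isotopic to the identity of a surface of negative Euler characteristic has one, by Lefschetz). Otherwise each $S_h$ is an essential subsurface of $\Sigma$: its boundary curves cannot bound discs (the disc that $\partial_{j,h}$ bounded in $R$ was removed when forming $R'$, and $S$ is not a disc) and, unless $S$ is an annulus, no two of them are homotopic in $\Sigma$ — so the classical fact that ${\rm{Mod}}(S,\partial S)$ injects into the mapping class group of any surface containing such a subsurface applies, and because the $S_h$ are disjoint and pairwise non-isotopic the induced map $\prod_k{\rm{Mod}}(S)_k\to{\rm{Mod}}(\Sigma)$ is injective (for $S$ an annulus one argues directly, the relevant group being the cyclic group generated by the Dehn twist in the essential curve $\mathrm{core}(S_h)$). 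Finally, if $g\cdot\mathbf m\in\ker\Phi$ with $g\ne 1$, then the homeomorphism realising $g$ permutes the pairwise non-isotopic system of curves $\{\partial S_h\}_{h\in G}$ (or $\{\mathrm{core}(S_h)\}_{h\in G}$) non-trivially, whereas $\Phi(\mathbf m)^{-1}$ fixes each component of it up to isotopy; that is a contradiction, so $g=1$, and then $\mathbf m=1$ by the preceding sentence. Hence $\Phi$ is a monomorphism and $\Sigma$ is the desired closed surface. The step I anticipate as the main obstacle is not the surgery, which is routine bookkeeping, but making this injectivity watertight: verifying that the glued-in copies are genuinely essential, disposing of the degenerate cases ($S$ a disc or annulus, or two boundary curves of some $S_h$ becoming isotopic in $\Sigma$), and invoking exactly the right form of the subsurface-injectivity theorem for mapping class groups.
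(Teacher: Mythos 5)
Your proof is correct and follows essentially the same route as the paper: realise $G$ as a free group of symmetries of a closed surface, equivariantly remove discs, glue in $|G|$ disjoint copies of $S$ permuted by $G$, and map the wreath product in via extension-by-identity on each copy together with the homomorphism induced by the $G$-action. The only real difference is that the paper first replaces $S$ by a surface $\overline S$ with a single boundary component (by attaching one-holed tori to all but one component of $\partial S$), so each copy is glued along just one curve and the boundary curves of $S$ inside $S_g$ bound one-holed tori; this disposes in one stroke of the essentialness and non-isotopy checks for the curves $\partial S_h$ that occupy the last part of your argument, including your disc and annulus cases.
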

 
\begin{proof} Let
 $\overline S$ be the surface obtained from $S$ by attaching a 1-holed torus to
 all but one of the boundary components of $S$ (so $\overline S$ has just one boundary component). 
Every finite group has a faithful realisation 
as a group of symmetries of a closed hyperbolic surface.
We realise $G$ on a surface and equivariantly delete an open
disc about each point in a free orbit. We then glue a copy of $\overline S$
to each of the  resulting boundary components  and extend
the action of $G$ in the obvious manner. Let $S_g$ be the resulting surface.

Corresponding to each of the attached copies of $\overline S$ there is an
injective homomorphism ${\rm{Mod}}(S) \to {\rm{Mod}}(S_{g})$ 
obtained by extending
homeomorphisms to be the identity on the complement of the attached copy
of $S$. (The extension respects isotopy classes.) Thus we obtain
$|G|$ commuting copies $\{M_\g\mid \g\in G\}$
 of ${\rm{Mod}}(S)$ in ${\rm{Mod}}(S_{g})$. The
action of $G\subset {\rm{Mod}}(S_{g})$ by conjugation permutes the $M_g$
and the canonical map $G\ltimes \oplus_{\g\in G} M_\g\to {\rm{Mod}}(S_{g})$
 is injective.
\end{proof} 

\begin{corollary}\label{c:fi} If a group $\G$ has a subgroup of finite index that embeds in a right angled Artin group, then
$\G$ embeds in ${\rm{Mod}}(S_{g})$ for infinitely many closed surfaces $S_g$.
\end{corollary}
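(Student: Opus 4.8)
The plan is to combine Proposition~\ref{l:sg} with the standard embedding of a finite-index overgroup into a wreath product, using the RAAG-in-surface embedding established earlier in this section. First I would fix the data: let $\G$ contain a finite-index subgroup $K$ that embeds in a right angled Artin group $A$. Since every RAAG embeds in the mapping class group of a suitable compact surface with non-empty boundary (as explained above, via Dehn twists in curves dual to the defining graph, after puncturing or deleting discs to pass to $S'$), there is a compact surface $S$ with $\partial S\neq\emptyset$ and a monomorphism $K\hookrightarrow A\hookrightarrow\mathrm{Mod}(S)$.

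Next I would invoke the classical fact that if $K<\G$ has finite index then $\G$ embeds in the (restricted, permutational) wreath product $K\wr(\G/K)$, where $\G/K$ denotes the finite set of cosets with the natural $\G$-action — note $\G/K$ need not be a group, but one can instead use the core $\G$-action on cosets, realised inside the symmetric group $G:=\mathrm{Sym}(\G/K)$, so that $\G$ embeds in $K\wr G$ with $G$ finite. Composing the coordinate embedding $K\hookrightarrow\mathrm{Mod}(S)$ in each factor gives a monomorphism $K\wr G\hookrightarrow\mathrm{Mod}(S)\wr G$, hence $\G\hookrightarrow\mathrm{Mod}(S)\wr G$. Then Proposition~\ref{l:sg} supplies a closed surface $S_g$ and a monomorphism $\mathrm{Mod}(S)\wr G\hookrightarrow\mathrm{Mod}(S_g)$; composing, we obtain $\G\hookrightarrow\mathrm{Mod}(S_g)$ for this particular closed surface.

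To get \emph{infinitely many} closed surfaces, I would observe that the construction has slack at two independent points: one may attach 1-holed tori to \emph{more} of the boundary components of $S$ in forming $\overline S$ (or iterate, attaching handles), and one may realise the finite group $G$ faithfully on closed hyperbolic surfaces of arbitrarily large genus and delete discs about arbitrarily many free $G$-orbits. Either modification strictly raises the genus of the resulting $S_g$ while leaving the embedding $\mathrm{Mod}(S)\wr G\hookrightarrow\mathrm{Mod}(S_g)$ intact, so one produces an infinite strictly increasing sequence of genera, each carrying a copy of $\G$.

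The only real point requiring care — the "main obstacle", such as it is — is checking that the canonical map $K\wr G\to\mathrm{Mod}(S)\wr G$ really is injective and that it composes correctly with the map of Proposition~\ref{l:sg}; this amounts to noting that the $|G|$ copies $M_\g$ of $\mathrm{Mod}(S)$ produced there are supported on disjoint subsurfaces (so they generate an internal direct sum $\oplus_\g M_\g$, not merely a quotient of it) and that the conjugation action of $G$ permutes them according to the regular representation, exactly matching the wreath-product structure. Both are immediate from the explicit cut-and-paste description in the proof of Proposition~\ref{l:sg}. The passage from $\G$ to its embedding in $K\wr G$ is entirely standard, so no difficulty arises there.
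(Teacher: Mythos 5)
Your proof is correct and follows essentially the same route as the paper: embed the finite-index subgroup in $\mathrm{Mod}(S)$ for a compact surface with boundary via a RAAG, pass to a wreath product with a finite group, and apply Proposition~\ref{l:sg}, with the infinitude of genera coming from the flexibility of that construction. The only cosmetic difference is that you handle possible non-normality of $K$ via the universal embedding into $K\wr\mathrm{Sym}(\G/K)$, whereas the paper's setup (which assumes normality) invites the slightly cleaner step of first replacing $K$ by its normal core in $\G$, which is still of finite index and still embeds in the RAAG.
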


\begin{remark} There is considerable flexibility in the above construction, but
in general one cannot embed $\G$ in $\mod$ for {\em all}
surfaces of sufficiently high genus. Indeed there are constraints
even for finite groups \cite{ravi}.
\end{remark}

\section{Subgroups with infinitely many conjugacy classes of torsion elements}\label{s:fin}

Problem 3.10  on Farb's list \cite{farb} asks for a finitely presented subgroup of a mapping class group that has
finitely many conjugacy classes of finite subgroups. (Mapping class groups themselves have only finitely many conjugacy
classes of finite subgroups -- see the appendix to \cite{mb:sl}.) This can be settled by combining Proposition \ref{l:sg} with
 the constructions used in \cite{mb:sl} to provide the first examples of finitely presented subgroups of ${\rm{SL}}(n,\Z)$
 that have infinitely many conjugacy classes of finite subgroups, as we shall explain.
 
 \begin{prop}\label{p:surffin} There are infinitely many closed surfaces whose mapping class groups contain finitely presented subgroups with
 infinitely many conjugacy classes of finite cyclic subgroups. 
 \end{prop}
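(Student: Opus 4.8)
The plan is to produce a finitely presented group $\G$ that embeds in a right-angled Artin group (or, more precisely, in a RAAG-related group to which the embedding machinery of Section \ref{s:raags} applies) and that has infinitely many conjugacy classes of finite cyclic subgroups, and then to feed it into Proposition \ref{l:sg} and Corollary \ref{c:fi}. The source of such a group will be the construction from \cite{mb:sl}: one takes a suitable finitely presented group $Q$ of type $F_3$ containing infinitely many ``independent'' torsion elements whose conjugacy is not detected by the quotient, applies the Haglund--Wise version of the Rips construction (Theorem \ref{t:rips}) to obtain $1\to N\to \G\to Q\to 1$ with $\G$ hyperbolic and virtually special and $N$ finitely generated, passes to a finite-index subgroup $\G_0<\G$ that embeds in a RAAG $R$, and forms the fibre product $P_0<\G_0\times\G_0$, which is finitely presented by the 1-2-3 Theorem. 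The point is that the torsion of $Q$ is reflected in $P_0$ (via diagonal-type lifts, as in the conjugacy argument of Lemma \ref{l2} and Theorem \ref{t:conj}), so that $P_0$ inherits infinitely many conjugacy classes of finite cyclic subgroups from $Q$.

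Concretely, the steps are: (1) recall/cite the group-theoretic input from \cite{mb:sl}, namely a finitely presented group $Q$ of the appropriate finiteness type containing finite cyclic subgroups $\langle t_i\rangle$ that are pairwise non-conjugate, with the non-conjugacy surviving in a controlled way; (2) run the Haglund--Wise Rips construction to get $1\to N\to\G\to Q\to 1$ with $\G$ torsion-free hyperbolic virtually special and $N$ finitely generated, and then the finite-index subgroup $\G_0$ embedding in a RAAG; (3) invoke the 1-2-3 Theorem for $1\to N_0\to\G_0\to Q_0\to 1$ to get the finitely presented fibre product $P_0$; (4) exhibit explicit finite cyclic subgroups of $P_0$ mapping onto the $\langle t_i\rangle$ and use a centralizer argument (as in Lemma \ref{l1}, using that centralizers in $\G$ are virtually cyclic) to show that distinct $i$ give non-conjugate finite subgroups of $P_0$; (5) since $\G_0$, hence $\G_0\times\G_0$, hence $P_0$, virtually embeds in a RAAG, apply Corollary \ref{c:fi} to conclude that $P_0$ embeds in $\mathrm{Mod}(S_g)$ for infinitely many closed surfaces, and since conjugacy classes of finite cyclic subgroups of $P_0$ remain distinct in the overgroup $\mathrm{Mod}(S_g)$ only if they remain distinct --- here one must be slightly careful, so instead of asking for distinctness in $\mathrm{Mod}(S_g)$ one simply records that $P_0$ \emph{itself} has the property and $P_0<\mathrm{Mod}(S_g)$, which is exactly what the proposition claims.

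Wait --- the last point deserves emphasis, because it is the subtle step. The proposition asserts only that the mapping class group contains a finitely presented \emph{subgroup} with infinitely many conjugacy classes of finite cyclic subgroups; it does not claim these become non-conjugate in the big group. So I do \emph{not} need to control conjugacy in $\mathrm{Mod}(S_g)$ at all; I only need the internal statement about $P_0$. The genuinely delicate part is therefore step (4): verifying that the lifted torsion elements in the fibre product really do land in pairwise non-conjugate finite cyclic subgroups of $P_0$. This is where one must track how the Rips construction interacts with torsion in $Q$ --- in particular one wants the preimage in $\G$ of a finite cyclic subgroup to split, so that $\G\to Q$ restricts to an isomorphism on a lift $\langle \tilde t_i\rangle$, which is automatic since $\G$ is torsion-free and any finite subgroup of $Q$ lifts only as a complement; and then one must see that the diagonal copies $(\tilde t_i,\tilde t_i)$ lie in $P_0$ (clear, as $p(\tilde t_i)=p(\tilde t_i)$) and that a conjugacy $(g,h)^{-1}(\tilde t_i,\tilde t_i)(g,h)=(\tilde t_j,\tilde t_j)$ inside $P_0$ forces $t_i$ and $t_j$ to be conjugate in $Q_0$, hence in $Q$. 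The finite-index passage from $Q$ to $Q_0$ requires that one still has infinitely many non-conjugate finite cyclic subgroups in $Q_0$; this can be arranged, as in \cite{mb:sl}, by choosing $Q$ so that the relevant torsion survives in every finite-index subgroup. I expect this bookkeeping --- rather than any single hard theorem --- to be the main obstacle, and the rest to follow formally from results already quoted in the excerpt.
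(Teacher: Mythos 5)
Your proposal has a fatal flaw at its core: the fibre product $P_0$ you construct is \emph{torsion-free}, so it cannot have more than one conjugacy class of finite cyclic subgroups. The group $\G$ produced by the Haglund--Wise Rips construction is virtually special, and the whole point of passing to the finite-index subgroup $\G_0$ is that it embeds in a right-angled Artin group; RAAGs are torsion-free, hence so are $\G_0$, $\G_0\times\G_0$, and every subgroup of them, including $P_0$. Your step (4) asserts that a finite cyclic subgroup $\<t_i\><Q$ lifts to a finite cyclic subgroup $\<\tilde t_i\><\G$, and that this is ``automatic since $\G$ is torsion-free'' --- this is exactly backwards. Torsion-freeness of $\G$ means the extension $1\to N\to p^{-1}(\<t_i\>)\to\<t_i\>\to 1$ can \emph{never} split off a finite complement: there is no element of finite order in $\G$ at all, so no $\tilde t_i$ exists and no diagonal torsion element $(\tilde t_i,\tilde t_i)$ lands in $P_0$. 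This is precisely why the paper's introduction flags that these torsion constructions ``do not involve RAAGs'' and why the phenomenon cannot occur in the (torsion-free) Torelli group. No choice of $Q$ can rescue the strategy: any group that embeds in a RAAG, or in a product of copies of a torsion-free hyperbolic group, is torsion-free.

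The paper's actual route is quite different and you should compare it with your plan. The finitely presented subgroup is not a fibre product over a Rips kernel but a Stallings--Bieri-type kernel: take $D=T_n\times T_n\times T_n\times T_n$ with $T_n=\<a,b\mid [a,b]^n=1\>$ (or an $n$-fold product of copies of $\G_0(17)<{\rm{SL}}(2,\Z)$), map onto $\Z$ by summing an epimorphism on each factor, and let $K$ be the kernel. By \cite{mb:sl}, $K$ is finitely presented (this is where having at least three factors matters) and contains infinitely many $K$-conjugacy classes of finite cyclic subgroups, because conjugating a torsion element by elements of $D$ outside $K$ produces subgroups of $K$ that are conjugate in $D$ but not in $K$. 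The ambient group $D$ has torsion, so it is \emph{not} embedded via a RAAG; instead one uses the fact that $D$ is virtually a product of surface groups (resp.\ free groups), embeds that torsion-free finite-index subgroup in ${\rm{Mod}}(S)$ for $S$ with boundary, and then uses the wreath-product construction of Proposition \ref{l:sg} --- where the torsion enters ${\rm{Mod}}(S_g)$ through the finite group acting on the closed surface --- to embed all of $D$, hence $K$. Your closing observation that one only needs non-conjugacy \emph{inside} the subgroup, not in ${\rm{Mod}}(S_g)$, is correct and is indeed how the proposition is read; but the subgroup itself must be built by a mechanism that tolerates torsion, which the RAAG/fibre-product machinery cannot.
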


We give two explicit examples of subgroups as described in the preceding proposition.
Earlier examples are due to Brady, Clay and Dani \cite{BCD}.
 
 \begin{example} Let $T_n = \<a,b\mid [a,b]^n=1\>$, where $n\ge 2$. This is a cocompact lattice in  ${\rm{PSL}}(2,\R)$.
 The hyperbolic orbifold $\mathbb H^2/T_n$  is a torus with a  cone point of index $n$, corresponding to the fixed point of $[a,b]$.
  For each epimorphism $T_n\to\Z$,
 the kernel of the induced map from $D=T_n\times T_n\times T_n\times T_n$ onto $\Z$ is finitely presented and has infinitely many conjugacy
 classes of elements of order $n$ (see \cite{mb:sl} Proposition 4). $D$ has a subgroup of finite index that is a direct product of 
 four copies of the fundamental group of a closed hyperbolic surface. There are many ways to embed a
surface group in the mapping class group of a compact surface with boundary, so we can apply the
construction of Proposition \ref{l:sg}.
\end{example} 
 
 \begin{example} Consider the subgroup of finite index $\G_0(17)<{\rm{SL}}(2,\Z)$ consisting of matrices such that $17$ divides the
 entry in the bottom left corner. It is well known that there is a surjection $\phi:\G_0(17)\to\Z$. 
In $\G_0(17)$ one has the following
 element of order four:
 $$
\gamma=\begin{pmatrix}  
      4 & -1 \\
      17&-4\\
 \end{pmatrix}.
$$ 
Let $n\ge 3$, let $\Phi_n:\G_0(17)\times\dots\times\G_0(17)\to\Z$ be the surjection that restricts to $\phi$ on each factor,
and let $K_n$ be the kernel.   
It is proved on page 630 of \cite{mb:sl} that $K_n$  is finitely presented,
and among the cyclic subgroups $\<(x^{-1}\gamma x, \gamma,\dots,\gamma)\>$ with $x\in\G_0(17)$, there
are infinitely many conjugacy classes in $K_n$. It is easy to embed a direct product of $g$ non-abelian free groups in the mapping
class group of a (closed, punctured or bounded) surface of genus $g$, with each summand supported on a subsurface homeomorphic
to a torus with one boundary component. Thus, using Proposition \ref{l:sg}
 or otherwise, one can embed a direct product
of any number of copies of ${\rm{SL}}(2,\Z)$, which is virtually free,
 in the mapping class group of a closed surface.
\end{example}

\begin{remark}\label{r:LN}
Although the examples constructed above are finitely presented, they are not of type ${\rm{FP}}_\infty$. 
In \cite{LN} Leary and Nucinckis construct a group $\G$ and a subgroup
of finite index $H$ so that $H$ has a finite classifying space and is a subgroup of a RAAG, but $\G$ has infinitely 
many conjugacy classes of finite subgroups. Later results of Leary and Hsu \cite{ijl}, \cite{HL} establish the
corresponding result for finite cyclic subgroups. Thus, in the light of Corollary \ref{c:fi}, one can improve the condition ``finitely presented" in 
Proposition \ref{p:surffin}  to ``type VF".
\end{remark}

\section{Dehn functions}

As RAAGs and mapping class groups are automatic \cite{mosher}, one can solve the word problem in such a group in quadratic time \cite{Ep}.
This solution to the word problem can be applied to any finitely generated subgroup, and hence such subgroups also have a solvable
word problem. But this leaves open the question of how complicated the geometry of the word problem for finitely presented
subgroups can be (as measured by their Dehn functions, for example).

Recall that the {\em Dehn function} of a finitely presented group $\G=\<A\mid R\>$ measures the complexity of the word problem by counting the
number of times one has to apply the defining relations in order to prove that a word $w$ in the generators represents the identity in the group:
${\rm{Area}}(w)$ is defined to be the least integer $N$ for which there is an equality
$$
w = \prod_{i=1}^N \theta_i r_i^{\pm 1}\theta_i^{-1}
$$ in the free group $F(A)$, with $r_i\in R$, and the Dehn function of $\<A\mid R\>$ is 
$$
\delta(n) := \max \{{\rm{Area}}(w) \mid w=_\G 1,\,\ |w|\le n\},
$$
where $|w|$ denotes word-length. 

It is an open question as to whether all finitely presented subgroups of direct products of free groups have polynomial Dehn functions (cf.~\cite{dison}),
but as a further illustration of our main theme -- that subgroups of RAAGs are much more diverse -- we offer:

\begin{prop} There exist right-angled Artin groups $A$ and finitely presented subgroups $P<A$
such that $P$ has an exponential Dehn function.
\end{prop}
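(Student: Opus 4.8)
The plan is to exhibit the exponential Dehn function by following exactly the template that produced the pathological subgroups $P_0$ in the previous sections: namely, take a fibre product $P<\G\times\G$ coming from a short exact sequence $1\to N\to\G\overset{p}\to Q\to 1$ with $\G$ hyperbolic and virtually special (so that a finite-index subgroup $\G_0$ embeds in a RAAG $A$, and hence $\G_0\times\G_0<A\times A$), $N$ finitely generated, and $Q$ of type $F_3$ so that the $1$-$2$-$3$ Theorem guarantees $P$ is finitely presented. The point is that in this setting the Dehn function of the fibre product $P$ is controlled from below by the distortion of $N$ in $\G$, or equivalently by the complexity of the word problem in $Q$ as seen through the rewriting of relators; concretely, if $Q$ has word problem whose Dehn function (or, more robustly, whose ``Dehn-function-as-seen-in-$\G$'' via the conjugation relations $b^\varepsilon a b^{-\varepsilon}=u_{b,\varepsilon}$ used in Lemma \ref{l2}) grows exponentially, one can transport this into a lower bound for $\delta_P$. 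So the first step is to choose $Q$ appropriately.

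First I would choose $Q$ to be a group of type $F_3$ (indeed one may take $Q$ with an aspherical presentation, as in the proof of Theorem \ref{t:conj}, so $Q$ is torsion-free) whose Dehn function is exponential — for instance a suitable strongly-aspherical group, or one of the standard examples (a free-by-cyclic group such as $F_2\rtimes\Z$ with an exponentially-growing automorphism, which is even of type $F$). Apply Theorem \ref{t:rips} to get $1\to N\to\G\to Q\to 1$ with $\G$ hyperbolic and virtually special; pass to $\G_0$ of finite index embedding in a RAAG $R$, set $A=R\times R$, let $Q_0=p(\G_0)$, $N_0=\G_0\cap N$, and invoke the $1$-$2$-$3$ Theorem to see that the fibre product $P<\G_0\times\G_0\subset A$ is finitely presented (this is verbatim the argument in the proof of Theorem \ref{t:memb}).

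The heart of the argument is the lower bound $\delta_P(n)\succeq \exp(n)$. Here I would argue as follows: words of the form $(w,1)a(w,1)^{-1}(a')^{-1}$, where $a\in N_0$ is fixed, $w$ is a word in $\G_0$ with $p(w)=1$, and $a'$ is the result of rewriting $w a w^{-1}$ into the subgroup $N_0$ using the conjugation relations, represent the identity in $P$; but a van Kampen diagram for such a word must, roughly speaking, ``witness'' that $p(w)=1$ in $Q_0$, and since $\G$ is hyperbolic, $N_0$ is exponentially distorted in $\G_0$ precisely when $Q_0$ has exponential Dehn function, forcing the rewritten word $a'$ (hence the area) to be exponentially long. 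The cleanest way to make this rigorous is probably to cite the known relationship between the Dehn function of a fibre product $P<\G\times\G$ and the distortion of $N$ in $\G$ (together with the fact that, for $\G$ hyperbolic, $N=\ker(\G\to Q)$ is distorted exactly according to the complexity of the word problem in $Q$); alternatively one can give a direct van-Kampen-diagram argument inside $\G_0\times\G_0$, using that $\G_0\times\G_0$ has quadratic Dehn function so that any ``short'' filling in the ambient group would contradict the distortion estimate. The upper bound $\delta_P(n)\preceq \exp(n)$ is comparatively soft: $P$ is a finitely presented subgroup of the RAAG $A$, which is automatic and hence has quadratic Dehn function and a linearly-bounded distortion-independent structure; combined with the $1$-$2$-$3$ Theorem's explicit finite presentation, whose relators are the obvious commutators together with relators of length comparable to the conjugating words $u_{b,\varepsilon}$, one gets an exponential upper bound by a standard counting argument (or simply by noting that fibre products of this kind always have at worst exponential Dehn function).

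The main obstacle I expect is making the lower bound airtight: one must be careful that passing to the finite-index subgroups $\G_0$, $Q_0$, $N_0$ does not destroy the exponential growth of the relevant filling invariant (finite-index subgroups have Dehn functions equivalent to the ambient group, so this is fine, but it needs to be said), and one must pin down precisely which filling invariant of $Q$ transfers — it is really the distortion of $N$ in $\G$, and one needs the (standard but not entirely trivial) fact that for a hyperbolic $\G$ this distortion is at least the Dehn function of $Q$. An alternative, and perhaps technically smoother, route is to bypass the Rips construction entirely and instead build $A$ and $P<A$ by hand from a concrete group like $F_2\rtimes_\phi\Z$: such a group is itself (virtually) special in favourable cases, or can be coordinatised so that a subdirect product inside a product of free groups has provably exponential Dehn function, using the explicit distortion of the $\phi$-action; I would mention this as the fallback if the hyperbolic-Rips route proves fiddly.
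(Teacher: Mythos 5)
Your primary route (Rips construction over a type-$F_3$ group $Q$ with exponential Dehn function, then the fibre product) has a genuine gap at the step you yourself flag as the ``heart of the argument'': the passage from ``$N$ is exponentially distorted in $\G$'' to ``$\delta_P$ is exponential''. There is no known general principle of this kind to cite. Distortion of a subgroup does not by itself lower-bound a Dehn function, and lower bounds for Dehn functions of fibre products arising from the Rips construction are notoriously out of reach (compare the Stallings--Bieri groups, where the naive guesses for the Dehn function were wrong and the actual computation required heavy machinery). Your sketched van Kampen argument --- that a diagram for $(w,1)$-type words must ``witness'' $p(w)=1$ in $Q_0$ --- does not go through as stated: the candidate words already have length comparable to the rewritten element $a'$, so their area gives no superlinear information, and the ``short filling in the ambient group would contradict the distortion estimate'' idea conflates area with diameter/length. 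The upper bound is also not free: ``fibre products of this kind always have at worst exponential Dehn function'' is not a statement you can simply assert, though results of Dison in this direction exist when $Q$ has controlled filling invariants.

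Your fallback is the right move, and it is essentially what the paper does, except that the relevant fibred group is taken to be the fundamental group of a finite-volume hyperbolic $3$-manifold $M$ fibring over the circle: writing $\pi_1M=\Sigma\rtimes\Z$, one takes $P=(\Sigma\times\Sigma)\rtimes\Z$, the fibre product over $Q=\Z$ inside $\G=\pi_1M\times\pi_1M$. The exponential Dehn function of $P$ (both bounds) is a previously established theorem (\cite{mb:haef}, Theorem~2.5), whose proof is a bespoke $t$-corridor argument exploiting the semidirect product structure and the exponential distortion of $\Sigma$ in $\pi_1M$; virtual specialness of $\G$ comes from \cite{agol}, and the growth type of the Dehn function survives passage to the finite-index subgroup that embeds in a RAAG. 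Note that here $Q=\Z$, so the driver of the exponential Dehn function is emphatically \emph{not} the complexity of the word problem in $Q$, contrary to the framing of your main route; it is the distortion of the fibre together with a special structure of $P$ that makes the area lower bound provable. Had you led with the fallback and cited the known theorem for the lower bound, the proof would be complete.
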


\begin{proof} Let $M$ be a finite-volume hyperbolic 3-manifold that fibres over the circle. Then
$\pi_1M = \Sigma\rtimes\Z$, and $\G=\pi_1M\times\pi_1M$ contains $P:= (\Sigma\times\Sigma)\rtimes\Z$,
the inverse image of the diagonal in $\G/(\Sigma\times\Sigma) = \Z\times\Z$. The Dehn function of $P$ is exponential; see
\cite{mb:haef} Theorem 2.5. The growth of a Dehn function is preserved on passage to subgroups of finite
index, and by \cite{agol} there is a subgroup of finite index in $\G$ that embeds 
in a right-angled Artin group.
\end{proof}

\subsection{Farb \cite{farb}, Question 3.9} This question
 asks if all finitely presented subgroups of $\mod$ are combable or automatic. The answer to this question is {\em{no}}.
Indeed one can already see this by observing that $\mod$ contains a direct product of $g$ non-abelian
free groups, and if $g\ge 3$ then such a direct product contains finitely presented groups whose
 third homology is not finitely generated
\cite{stall}, whereas the homology groups of combable and automatic groups are finitely generated in every dimension \cite{Ep}. A natural
relaxation of Farb's question would be to ask if all finitely presented subgroups of mapping class groups have quadratic, or at least polynomial,
Dehn functions. The above construction shows that the answer to this weaker question is still {\em{no}}.

\begin{corollary} 
The mapping class group of any surface of sufficiently high genus contains a 
finitely presented subgroup whose Dehn function is exponential.
\end{corollary}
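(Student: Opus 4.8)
The plan is to combine the preceding proposition with the elementary embedding result recorded at the start of Section \ref{s:raags}. That proposition supplies a right-angled Artin group $A$ together with a finitely presented subgroup $P<A$ whose Dehn function is exponential. Because $A$ is an \emph{honest} RAAG, not merely a virtual one, one does not need the wreath-product construction of Proposition \ref{l:sg} (which alone would give only infinitely many surfaces): the discussion in Section \ref{s:raags} shows that $A$ embeds in ${\rm{Mod}}(S)$ as soon as the graph dual to the defining graph of $A$ embeds in $S$.

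First I would record that this happens uniformly in high genus. Let $g_0=g_0(A)$ be the least genus of a closed surface into which that dual graph embeds. A routine surgery --- delete a disc from a face of the complement and glue on a handle, or more generally a piece of the appropriate genus --- shows that the dual graph then embeds in \emph{every} closed surface of genus $g'\ge g_0$; and, as noted in Section \ref{s:raags}, one may in addition delete discs or points in the complement of the curves supporting the generating twists. Hence for every surface $S$ of finite type with genus at least $g_0$ there is an injection $\iota\colon A\hookrightarrow{\rm{Mod}}(S)$.

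The remaining step transfers the Dehn function. The restriction $\iota|_P$ is an isomorphism onto its image $\iota(P)<{\rm{Mod}}(S)$, so $\iota(P)$ is a finitely presented subgroup of ${\rm{Mod}}(S)$ isomorphic to $P$; since the Dehn function of a finitely presented group is an invariant of its isomorphism type (indeed of its quasi-isometry type), $\iota(P)$ has an exponential Dehn function, which proves the corollary with the explicit bound ``genus $\ge g_0(A)$''. I do not expect any genuine obstacle here: all the substance lies in the preceding proposition --- producing the fibred finite-volume hyperbolic $3$-manifold $M$, identifying the Dehn function of $(\Sigma\times\Sigma)\rtimes\Z$ as exponential via \cite{mb:haef}, and invoking \cite{agol} to embed a finite-index subgroup in a RAAG --- all of which we are entitled to assume. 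The only points needing (routine) care are the two above: that the genus threshold can be made uniform, and that being finitely presented and having an exponential Dehn function pass to the isomorphic copy $\iota(P)$ sitting inside ${\rm{Mod}}(S)$.
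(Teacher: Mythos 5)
Your proof is correct and follows essentially the same route the paper intends: take the finitely presented subgroup $P$ of an honest RAAG $A$ supplied by the preceding proposition, embed $A$ in ${\rm{Mod}}(S)$ for every surface of sufficiently high genus via the Crisp--Wiest observation of Section \ref{s:raags} (rather than the wreath-product construction, which would only give infinitely many surfaces), and use that an exponential Dehn function is an isomorphism invariant. The two points you flag as needing care (uniformity of the genus threshold and transfer of the Dehn function) are indeed routine and handled as you say.
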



\end{document}